\documentclass[11pt, reqno, letterpaper]{amsart}

\usepackage{esint}
\usepackage{amssymb}
\usepackage{verbatim}

\usepackage{bbm}


\usepackage[
hypertexnames=false, colorlinks,
linkcolor=black,citecolor=black,urlcolor=black, linktocpage=true
]%
{hyperref} 
\hypersetup{bookmarksdepth=3}

	\setlength{\textwidth}{15.5cm}			  %
	\setlength{\textheight}{22cm}			  
	\setlength{\topmargin}{-.5cm}			  %
	\setlength{\oddsidemargin}{6mm}			  %
	\setlength{\evensidemargin}{6mm}		  %
	\setlength{\abovedisplayskip}{3mm}		  %
	\setlength{\belowdisplayskip}{3mm}		  %
	\setlength{\abovedisplayshortskip}{0mm}	  %
	\setlength{\belowdisplayshortskip}{2mm}	  %
	\setlength{\baselineskip}{12pt}			  %
	\setlength{\normalbaselineskip}{12pt}	  %
	\normalbaselines						  %

\usepackage{graphicx}
\usepackage{
mathrsfs}

\usepackage[T1]{fontenc}

\theoremstyle{definition}

\newtheorem*{df*}{Definition}

\theoremstyle{remark}

\newtheorem*{rem*}{Remark}

\numberwithin{equation}{section}

\newcommand{\ddf}{\buildrel\mathrm{def}\over=}




\newtheorem{theorem}{Theorem}

\newtheorem{lemma}{Lemma}
\newtheorem{corollary}{Corollary}

\newtheorem{definition}{Definition}

\newcommand{\R}{\mathbb{R}}

\newcommand{\eps}{\varepsilon}

\usepackage{graphicx}
\DeclareGraphicsExtensions{.eps}

\newcommand{\av}[2]{\langle {#1}\rangle_{{}_{#2}}}

\begin{document}

\title[The maximal function]{Lower bounds for uncentered maximal functions in any dimension}


\author{Paata Ivanisvili}
\address{Department of Mathematics,  Kent State University, Kent, OH 44240, USA}
\email{ivanishvili.paata@gmail.com}

\author{Benjamin Jaye}
\thanks{B.J. is partially supported by the NSF grant DMS-1500881}
\address{Department of Mathematics,  Kent State University, Kent, OH 44240, USA}
\email{bjaye@kent.edu}

\author{Fedor Nazarov}
\thanks{F.N. supported in part by NSF DMS-1265623.}
\address{Department of Mathematics,  Kent State University, Kent, OH 44240, USA}
\email{nazarov@math.kent.edu}

\makeatletter
\@namedef{subjclassname@2010}{
  \textup{2010} Mathematics Subject Classification}
\makeatother

\subjclass[2010]{42B20, 42B35, 47A30}



%
%

\keywords{The maximal function operator, centered maximal function, uncentered maximal function, lower bounds, singular integral operators, convex bodies, Bellman function, dyadic cubes}

\begin{abstract}
In this paper we address the following question: given $ p\in (1,\infty)$, $n \geq 1$, does there exists a constant $A(p,n)>1$ such that $\| M f\|_{L^{p}}\geq A(n,p) \| f\|_{L^{p}}$ for any nonnegative $f \in L^{p}(\mathbb{R}^{n})$, where $Mf$ is a maximal function operator defined over the family of shifts and dilates of a centrally symmetric convex body.  The inequality fails in general for the centered maximal function operator, but nevertheless we give an affirmative answer to the question for the uncentered maximal function operator and the {\em almost centered} maximal function operator.  In addition, we also present the Bellman function approach of  Melas, Nikolidakis and Stavropoulos  to  maximal function operators defined over various types of families of sets, and in case of parallelepipeds we will show that   $A(n,p)=\left(\frac{p}{p-1}\right)^{1/p}$. 
\end{abstract}

\date{}
\maketitle

\setcounter{equation}{0}
\setcounter{theorem}{0}

\section{Maximal function operators and main results}
\subsection{Centrally symmetric convex bodies}
 Fix any centrally symmetric convex body $K$ in $\mathbb{R}^{n}$ (that is, a compact convex set with non-empty interior). Let $\mathcal{K}$ be the family of all shifts of dilations of $K$.  For $\lambda \in [0,1]$, and a  centrally symmetric convex body $S$, set $\lambda S$  to be the image of $S$ under the homothety  with the center of the $S$, and ratio $\lambda$. If $\lambda =0$ then $\lambda S \ddf \{ x\}$ where $x$ is the center of $S$. Given any nonnegative locally integrable function $f$, we define the maximal operator
\begin{align*}
(M_{\lambda}f)(x) \ddf \sup_{S \in \mathcal{K}:\, \lambda S \ni x} \frac{1}{|S|}\int_{S} f,
\end{align*}
where $|S|$ denotes Lebesgue measure of the set $S$. Notice that $M_{0}f$ is the usual centered maximal function operator, while $M_{1} f$ is the uncentered maximal function operator.  Our first  main result is the following theorem.
\begin{theorem}\label{mth}
Fix  $p\in (1,\infty)$, $n \geq 1$, and  $\lambda\in (0,1]$. There exists a constant $A(p,n,\lambda)>1$ such that
\begin{align}\label{maxin}
\| M_{\lambda} f\|_{L^{p}} \geq A(n,p,\lambda)\|f\|_{L^{p}}, \quad \text{for all} \quad f\geq 0, \; f \in L^{p}(\mathbb{R}^{n}).
\end{align}
\end{theorem}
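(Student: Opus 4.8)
The only elementary ingredient is that $M_\lambda f\ge f$ almost everywhere: for a.e.\ $x$ the dilates of $K$ \emph{centered at $x$} automatically contain $x$ in their $\lambda$-cores, so the Lebesgue differentiation theorem gives averages of $f$ over them tending to $f(x)$. Hence $\|M_\lambda f\|_{L^p}\ge\|f\|_{L^p}$ for free, and the content of \eqref{maxin} is a \emph{uniform} gap. The right picture is that the extra mass must be harvested off the support of $f$, or more generally where $f$ is small: when $\lambda>0$, a point $x$ lies in the $\lambda$-core of a large dilate $S$ of $K$ whose bulk sits far from $x$, so $M_\lambda f(x)$ feels the average of $f$ over $S$ even when $f(x)$ is tiny. (For $\lambda=0$ this mechanism is absent and the centered operator has nontrivial fixed points: e.g.\ for $K$ a Euclidean ball in dimension $n\ge 3$ and $p>\tfrac{n}{n-2}$, $M_0$ fixes $\min(1,|x|^{2-n})$, so there $A(n,p,0)=1$.)

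The engine I would develop is a non-local lower bound. For every dilate $S$ of $K$ and every $x\in\lambda S$ one has $M_\lambda f(x)\ge\langle f\rangle_S:=\frac1{|S|}\int_S f$. Taking $S$ to be the smallest dilate of $K$ concentric with a given $S_0=y+\rho_0K$ whose $\lambda$-core contains $x$ gives, for $x\notin S_0$, the decay estimate $M_\lambda f(x)\ge\lambda^{n}\langle f\rangle_{S_0}(\rho_0/\|x-y\|_K)^{n}$, where $\|\cdot\|_K$ is the Minkowski gauge of $K$. Raising to the $p$-th power and integrating over $\R^n\setminus S_0$ in polar-type coordinates adapted to $K$ collapses to the clean inequality
\[
\int_{\R^n}(M_\lambda f)^p\ \ge\ \int_{S_0}f^{\,p}\ +\ \frac{\lambda^{np}}{p-1}\,\langle f\rangle_{S_0}^{\,p}\,|S_0|,\qquad\text{valid for every dilate }S_0\text{ of }K.
\]
Since $\langle f\rangle_{S_0}^{p}|S_0|\le\int_{S_0}f^{p}\le\|f\|_{L^p}^p$ by Hölder, the second term is a genuine bonus whenever $f$ is comparable to a constant on a dilate of $K$ carrying a fixed fraction of its $L^p$-mass; and, applying the bound near each of several widely separated such dilates, the same follows whenever the mass of $f$ splits into well-separated chunks.

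The remaining, and by far the hardest, regime is when $f$ is ``spread out at every scale'', so that no single dilate of $K$ sees any concentration — the model being a very thin slab when $K$ is a ball. Here one must use the non-local bound across a whole range of scales at once: a Vitali-type selection yields a disjoint family $\{S_0^{(i)}\}$ of dilates of $K$ covering $\{f>0\}$ up to a null set, on each of which $f$ is within a factor $2$ of a constant, and the tail contributions $g_i(x)\sim\lambda^{n}|S_0^{(i)}|\langle f\rangle_{S_0^{(i)}}\|x-y_i\|_K^{-n}$ — together with the larger dilates that swallow many $S_0^{(i)}$ simultaneously — must be assembled, through a covering/Voronoi bookkeeping, into a lower bound of order $\delta(n,p,\lambda)\|f\|_{L^p}^p$ for $\int_{\{f=0\}}(M_\lambda f)^p$ (and, in the case $f>0$ a.e., for $\int_{\{M_\lambda f\ge 2f\}}(M_\lambda f)^p$). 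An alternative route to this last case is by contradiction and compactness: if \eqref{maxin} failed there would be $f_k\ge0$, $\|f_k\|_{L^p}=1$, $\|M_\lambda f_k\|_{L^p}\to1$, hence (using $a^p-b^p\ge(a-b)^p$ for $a\ge b\ge 0$) $M_\lambda f_k-f_k\to0$ in $L^p$; after normalizing by the dilation and translation invariance of the ratio $\|M_\lambda f\|_{L^p}/\|f\|_{L^p}$, one would hope to extract a limit $g\not\equiv0$ with $M_\lambda g=g$, which is rigid — $g>0$ a.e.\ since every point lies in the $\lambda$-core of a huge dilate of $K$ meeting $\{g>0\}$, and $M_\lambda g=g$ forces $\langle g\rangle_S\le\operatorname*{ess\,inf}_{\lambda S}g$ for every dilate $S$, incompatible with $0\ne g\in L^p$ once $S$ is allowed to grow.

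The main obstacle is exactly this last step. The invariance group of $M_\lambda$ is only the finite-dimensional dilations and translations, so a near-extremal sequence can degenerate in ways these do not control (spreading into needle-like shapes, concentrating, or splitting), and one must either inject enough quantitative geometry into the ``$f$ spread out'' regime that no limit object is needed — i.e.\ make the Vitali-type assembly above go through with an explicit $\delta$ — or prove a genuinely quantitative version of the rigidity of $M_\lambda g=g$ that survives all such degenerations. Establishing that the resulting case division is exhaustive, together with this rigidity/bookkeeping lemma, is where essentially all the work lies; the reduction $M_\lambda f-f\to0$, the non-local bound, and the two soft cases are comparatively routine.
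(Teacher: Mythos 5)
There is a genuine gap, and you have identified it yourself: the ``spread-out at every scale'' case is not resolved, only gestured at via a Vitali-type assembly or a soft compactness contradiction, neither of which you carry through. The non-local lower bound you derive is correct (and the computation $\int_{\R^n\setminus S_0}(M_\lambda f)^p \geq \tfrac{\lambda^{np}}{p-1}\langle f\rangle_{S_0}^p|S_0|$ checks out in polar coordinates adapted to the gauge of $K$), but it only produces a gain when $\langle f\rangle_{S_0}^p|S_0|$ is a fixed fraction of $\|f\|_{L^p}^p$, i.e.\ when $f$ is both flat and concentrated on some dilate of $K$. Your proposed dichotomy (concentrated vs.\ spread out) is not made precise, and the global compactness route you sketch is exactly the kind that fails: the symmetry group of $M_\lambda$ is only affine and finite-dimensional, so a near-extremal sequence with $\|f_k\|_{L^p}=1$, $\|M_\lambda f_k\|_{L^p}\to 1$ can lose all its mass (to thin slabs, to infinity, to splitting) and the weak limit $g$ may well be $0$, at which point the rigidity of $M_\lambda g = g$ is vacuous. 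You flag this honestly, but flagging the obstacle is not the same as overcoming it.

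The paper's route is different and sidesteps these difficulties. It does not split cases by the shape of $f$. Instead, for each level $t>0$ it applies the Besicovitch covering lemma to the family $\mathcal{K}(t)=\{K\in\mathcal K:\langle f\rangle_K=t\}$, producing an overlap function $\psi(\cdot,t)=\sum_j\chi_{K_{t,j}}$ with $\psi\le B(n)$, $\psi\ge 1$ on $\{f>t\}$, and the level-set identity $\int t\,\psi(\cdot,t)=\int \psi(\cdot,t) f$. For $\lambda=1$ one also has $\psi(\cdot,t)=0$ on $\{M_1 f<t\}$, and integrating the identity against $t^{p-2}\,dt$ gives the bound with $A=(1+\tfrac{1}{(p-1)B(n)})^{1/p}$ in a few lines. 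For $\lambda<1$ this vanishing property fails, and the paper replaces it with a quantitative dichotomy (Lemma \ref{either}): for each $\eps$ there is $\eta>0$ so that for every $K$ and $f\ge 0$, either $\int_K M_\lambda f\ge(1+\eta)\int_K f$, or $M_\lambda f\ge(1-\eps)\langle f\rangle_K$ on $(1-\eps)K$. Crucially, the compactness used to prove this dichotomy is \emph{localized}: by affine invariance one fixes a single $K_0$, normalizes $f_j$ to a probability density supported in $K_0$, and extracts a weak $L^1(K_0)$ limit via Dunford--Pettis after establishing uniform integrability from Stein's $L\log L$ inequality. This controlled setting (fixed domain, fixed $L^1$ mass) is what makes the compactness argument actually close, and it is the missing ingredient that your global near-extremal scheme would need but does not supply. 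If you want to salvage your approach, the non-local lower bound could be a useful supplementary tool, but the heart of the proof has to be a lemma of this localized, quantitative type, not a soft global limit.
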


This result answers a question raised to the authors by Andrei Lerner during his visit to Kent State. In the case $n=1$ and $\lambda =1$, Theorem~\ref{maxin} is due to Lerner \cite{ler}.  There had recently been some activity in understanding the analogous problem for dyadic maximal operators, see  A. Melas, E. Nikolidakis, Th. Stavropoulos \cite{MNS1} and Section 1.2 below, but Theorem \ref{mth} appears to be new in general for even the uncentered maximal operator if  $n>1$. 

Theorem~\ref{mth} does not hold in general for the centered maximal function operator $M_{0}f$. Indeed, let $K$ be the unit ball in $\mathbb{R}^{n}$, $n \geq 3$. Take $f(x) = \min\{|x|^{2-n},1\}$ where $p>\frac{n}{n-2}$. Then $f \in L^{p}(\mathbb{R}^{n})$, and, since $f$ is superharmonic, we have $M_{0} f = f$ and so $\|M_{0}f\|_{L^{p}} = \|f\|_{L^{p}}.$  


On the other hand if $n=1,2$ and $1<p<\infty$, or $n \geq 3$ and $p\leq \frac{n}{n-2}$, then any function $f\in L^p(\mathbb{R}^n)$ satisfying $\|M_0(f)\|_{L^p} = \|f\|_{L^p}$ (or equivalently $M_0f=f$) must be zero, see for instance~\cite{soul, martini, fiorenza}. Hence for this set of exponents we have $\|M_{0} f\|_{p} > \|f\|_{p}$ for each $f \in L^{p}$.  We should also mention~\cite{janakiraman} where (\ref{maxin}) was investigated in the case  $n=1$, $p=1$, and $\lambda=0$, with  $\| M_{0} f\|_{L^{1}}$ replaced by the weak (1,1) norm, and it was shown that in that case there is no such $A>1$. 


In Section~\ref{unc} we provide a simple proof of Theorem~\ref{mth} in the case when $\lambda=1$. In Section~\ref{acen} we adapt the proof to the general case  $\lambda\in (0,1)$.

Let us now make a few remarks about the nature of the constant $A(n,p,\lambda)$.  Our proof yields   $A(n,p,\lambda) = 1+\eps(n,p,\lambda)$, where $\eps(n,p,\lambda)$ decays exponentially with the dimension $n$.  This dependence is a direct consequence of our use of the Besicovitch covering lemma, and we do not know what the true dependence should be.   The constant $\eps(n,p,\lambda)$ must (in general) tend to zero as $\lambda\rightarrow 0$, but the dependence on $\lambda$ in our argument is essentially qualitative, as we rely on a compactness argument (Lemma \ref{either} below).  Finally, $\eps(n,p,\lambda)$ is comparable with  $\tfrac{1}{p-1}$ as $p\rightarrow 1^+$, as it should be.


\subsection{Other maximal functions}

We denote by  $\av{f}{A}$ the integral average of $f$ over a measurable set $A$, i.e., $\av{f}{A} = \frac{1}{|A|}\int_{A}f$. If $|A|=0$ then we set $\av{f}{A}=0$. Let $\mathcal{Q}$ be  some family of convex bodies in  $\mathbb{R}^{n}$. 
We define the maximal function operator over the family $Q$ as follows:
\begin{align*}
M_{\mathcal{Q}}f(x) = \sup_{Q \ni x:\; Q \in \mathcal{Q}}  \av{f}{A}.
\end{align*}
\begin{definition}
We say that the family $\mathcal{Q}$ is $\lambda$-dense ($\lambda >1$) if for any locally integrable $f \geq 0$ and  any $Q \in \mathcal{Q}$ there exists  {\em a filtration} $\{ \mathcal{F}_{n}\}_{n=0}^{\infty}$  associated to $Q$ and $f$ such that
\begin{itemize}
\item[1.] $\mathcal{F}_{0} = \{ Q\}$.
\item[2.] For any $n \geq 0$, $\mathcal{F}_{n}$ consists by  at most countable number of sets  from $\mathcal{Q}$.
\item[3.] $Q=\cup_{P \in \mathcal{F}_{n}}P$ for any $n \geq 0$.
\item[4.] The elements of  $\mathcal{F}_{n}$ are {\em almost disjoint}, i.e., $|P\cap R|=0$ for any different $P, R \in \mathcal{F}_{n}$ and for any $n \geq 1$.
\item[5.] $\mathcal{F}_{n+1}$ is a refinement of  $\mathcal{F}_{n}$, i.e.,  for any $P \in \mathcal{F}_{n}$ there is a family of sets $\mathrm{ch}(P) \subset  \mathcal{F}_{n+1}$ such that $P=\cup_{R\in \mathrm{ch}(Q)} R$.
\item[6.] $\lim_{n \to \infty} \sup_{P \in \mathcal{F}_{n}}\mathrm{diam}(P)=0$.
\item[7.] $\sup_{R \in \mathrm{ch}(P)}\{ \av{f}{R}\}\leq \lambda \av{f}{P}$ for any $P \in \mathcal{F}_{n}$ and any $n \geq 0$.

\end{itemize}

\end{definition}

We will show  in Lemma~\ref{ldens} that the  set of all parallelepipeds with sides  parallel to some fixed linearly independent $n$ vectors in $\mathbb{R}^{n}$   is $\lambda$-dense for any $\lambda >1$. In particular the set of all intervals on the real line is $\lambda$-dense for any $\lambda>1$.

We say that the family $\mathcal{Q}$ is {\em exhaustive} if for any compact $E \subset \mathbb{R}^{n}$ there exists $Q \in \mathcal{Q}$ such that $E \subset Q$. Our second main result is the following theorem.
\begin{theorem}\label{smth}
If the family $Q$ is exhaustive and $\lambda$-dense for some $\lambda>1$, then
\begin{align*}
\| M_{\mathcal{Q}} f\|_{L^{p}}\geq \left(\frac{\lambda^{p}-1}{\lambda^{p}-\lambda}\right)^{1/p}\| f\|_{L^{p}} \quad \text{for all} \quad f\geq 0, \; f\in L^{p}(\mathbb{R}^{n}).
\end{align*}
\end{theorem}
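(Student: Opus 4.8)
The plan is to pass from $M_{\mathcal{Q}}$ to a martingale maximal function along a single filtration and to prove the resulting discrete inequality by the Bellman‑function method of Melas--Nikolidakis--Stavropoulos. \emph{Reduction.} Fix $f\ge0$ in $L^p(\mathbb{R}^n)$. As $\mathcal{Q}$ is exhaustive, for any $\eps>0$ pick $Q\in\mathcal{Q}$ with $\int_Q f^p\ge(1-\eps)\int_{\mathbb{R}^n}f^p$ and also $\langle f\rangle_Q^p\,|Q|<\eps$; the latter is possible since $\langle f\rangle_Q^p|Q|=|Q|^{1-p}\bigl(\int_Qf\bigr)^p\to0$ as $Q$ exhausts $\mathbb{R}^n$ (split $f=f\mathbbm{1}_{B_R}+f\mathbbm{1}_{\mathbb{R}^n\setminus B_R}$, estimate $\int_Qf\le\|f\mathbbm{1}_{B_R}\|_{1}+\|f\|_{L^p(\mathbb{R}^n\setminus B_R)}|Q|^{1/p'}$, and let $R\to\infty$; here $p>1$ is essential). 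Using the $\lambda$‑density hypothesis on $Q,f$ gives a filtration $\{\mathcal{F}_n\}$, and I put $g(x)=\sup_n\langle f\rangle_{P_n(x)}$, $P_n(x)$ being the element of $\mathcal{F}_n$ through $x$. By properties $2$ and $3$ each $P_n(x)\in\mathcal{Q}$ contains $x$, so $g\le M_{\mathcal{Q}}f$ on $Q$ and $\|M_{\mathcal{Q}}f\|_{L^p}^p\ge\int_Q g^p$. Hence it suffices to prove
\[
\int_Q g^p\ \ge\ \frac{\lambda^p-1}{\lambda^p-\lambda}\int_Q f^p\ -\ C\,\langle f\rangle_Q^p|Q|
\]
for some $C=C(p,\lambda)$; letting $Q$ exhaust $\mathbb{R}^n$ and then $\eps\to0$ finishes the proof.

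\emph{Use of condition $7$.} For $t\ge\langle f\rangle_Q$ write $\{x\in Q:g(x)>t\}$ as the union of the maximal members $S$ of the filtration with $\langle f\rangle_S>t$: these are pairwise almost disjoint, they cover the set, and since the parent of each $S$ has average $\le t$, condition $7$ forces $t<\langle f\rangle_S\le\lambda t$. Summing $\int_S f=\langle f\rangle_S|S|$ yields the reverse weak‑type bound $\int_{\{g>t\}\cap Q}f\le\lambda t\,|\{g>t\}\cap Q|$ for $t\ge\langle f\rangle_Q$, which is the only place where $\lambda$ enters. Feeding this, together with $\{f>t\}\subseteq\{g>t\}$, into the layer‑cake formula already gives a discrete inequality with some constant $>1$ (but not the sharp $\tfrac{\lambda^p-1}{\lambda^p-\lambda}$); recovering the sharp value forces one to follow the nested structure of the superlevel sets of $g$, i.e.\ to run a Bellman argument.

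\emph{The Bellman function (the crux).} For a member $P$ let $g_P(x)=\sup\{\langle f\rangle_{P'}:x\in P'\subseteq P,\ P'\in\textstyle\bigcup_n\mathcal{F}_n\}$ be the maximal function over $P$ and its descendants. One seeks a continuous, $p$‑homogeneous $B(u,v,m)$ on $\{u^p\le v,\ m\ge u\}$ which is a pointwise lower bound for $\tfrac1{|P|}\int_P\max(g_P,m)^p$ in the variables $u=\langle f\rangle_P,\ v=\langle f^p\rangle_P,\ m$; such a $B$ is obtained from the \emph{obstacle condition} $B(u,u^p,m)\le\max(u,m)^p$ together with the \emph{super‑martingale inequality}
\[
\sum_{R\in\mathrm{ch}(P)}\frac{|R|}{|P|}\,B\bigl(\langle f\rangle_R,\langle f^p\rangle_R,\max(m,\langle f\rangle_R)\bigr)\ \ge\ B\bigl(\langle f\rangle_P,\langle f^p\rangle_P,m\bigr)
\]
\emph{valid for every admissible branching}, i.e.\ for all $0\le\langle f\rangle_R\le\lambda\langle f\rangle_P$ with $\sum_R\tfrac{|R|}{|P|}\langle f\rangle_R=\langle f\rangle_P$: one iterates the super‑martingale inequality down to the a.e.\ finite stopping level at which $\langle f^p\rangle$ first comes within $\eps$ of $\langle f\rangle^p$ (finite because $\langle f\rangle_{P_k(x)}\to f(x)$), applies the obstacle condition and $g_P\ge\langle f\rangle_P$ at the stopped sets, and lets $\eps\to0$. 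Taking $P=Q$ and $m=\langle f\rangle_Q$ then gives $\int_Q g^p\ge|Q|\,B(\langle f\rangle_Q,\langle f^p\rangle_Q,\langle f\rangle_Q)$, so the displayed discrete inequality follows as soon as one also knows $B(u,v,u)\ge\tfrac{\lambda^p-1}{\lambda^p-\lambda}\,v-C u^p$.

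\emph{The obstacle.} The whole weight of the argument is the construction of such a $B$ with the \emph{sharp} coefficient $\tfrac{\lambda^p-1}{\lambda^p-\lambda}$ — equivalently, the explicit production of the nonnegative correction $\Psi:=B-m^p-\tfrac{\lambda^p-1}{\lambda^p-\lambda}(v-u^p)$. The branching worst for the super‑martingale inequality concentrates the children's averages at the two endpoints $0$ and $\lambda\langle f\rangle_P$ of the admissible range; imposing the inequality at that configuration, consistently with the obstacle condition, is exactly what singles out $\tfrac{\lambda^p-1}{\lambda^p-\lambda}$, and the hard part is to verify it for all intermediate configurations. The remaining points — the stopping‑time/limiting passage above and the harmless $\langle f\rangle_Q^p|Q|\to0$ error term — are routine within this framework.
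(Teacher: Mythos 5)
You have correctly identified the Bellman-function framework -- it is essentially the same approach the paper takes (following Melas--Nikolidakis--Stavropoulos) -- and your reduction is sound: pass to a single filtration from the $\lambda$-density hypothesis, observe $g\le M_{\mathcal{Q}}f$ on $Q$, control the boundary term $\av{f}{Q}^p|Q|\to0$ as $Q$ exhausts (which indeed requires $p>1$), and reduce the global inequality to a local Bellman-type estimate plus a limiting argument. This parallels the paper's Lemma~\ref{ml} and the exhaustion step that follows it.

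There is, however, a genuine gap, which you yourself flag under ``the obstacle'': you never exhibit a candidate function $B$, nor do you prove the super-martingale inequality for it. But that verification \emph{is} the theorem; without it the argument merely reduces a known statement to an unproved one. The paper's choice is
\[
B(x,y,z)=z^p+\frac{\lambda^p-1}{\lambda^p-\lambda}\,(y-xz^{p-1})_+ ,\qquad 0\le x\le z,
\]
and the main inequality $B(x_Q,y_Q,z_Q)\le\sum_{P\in\mathrm{ch}(Q)}\frac{|P|}{|Q|}B(x_P,y_P,z_P)$ is established by replacing $z_P$ with $\max\{x_P,z_Q\}$ (legitimate since $B$ is increasing in $z$, which itself uses $\frac{\lambda^p-1}{\lambda^p-\lambda}\le\frac{p}{p-1}$), dropping the positive parts on the right, averaging the $x$'s and $y$'s, and dividing by $z_Q^p$, at which point everything collapses to the one-variable convexity estimate
\[
\frac{\lambda^p-1}{\lambda^p-\lambda}\,(s^p-s)\ \le\ s^p-1,\qquad 1\le s\le\lambda,
\]
applied at $s=\max\{x_P/z_Q,1\}\in[1,\lambda]$ (the upper bound is exactly where condition~7 enters). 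Your heuristic that the extremal branching sits at the endpoints of the admissible range is precisely what this chord-under-a-convex-graph picture encodes, but the verification for intermediate configurations that you defer is the entire technical content. One small inaccuracy worth noting: for the paper's $B$, the quantity you call $\Psi=B-m^p-\frac{\lambda^p-1}{\lambda^p-\lambda}(v-u^p)$ is in fact \emph{nonpositive} when $m\ge u$ and $v\ge u^p$; what you actually need is only the slice $B(u,v,u)=\frac{\lambda^p-1}{\lambda^p-\lambda}\,v+\bigl(1-\frac{\lambda^p-1}{\lambda^p-\lambda}\bigr)u^p$, which supplies your $-Cu^p$ error with $C=\frac{\lambda-1}{\lambda^p-\lambda}$.
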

Since $\frac{\lambda^{p}-1}{\lambda^{p}-\lambda}$ is decreasing in $\lambda$ for $\lambda >1$, we want to make $\lambda$ as close to $1$ as possible. Also notice that if the family $\mathcal{Q}$  is $\lambda$-dense for every $\lambda>1$ then we can take $\lim_{\lambda \to 1+}\left( \frac{\lambda-1}{\lambda^{p}-\lambda}\right)^{1/p}=\left(\frac{p}{p-1}\right)^{1/p}$. Thus  by Lemma~\ref{ldens} we obtain the following corollary.
\begin{corollary}\label{cor1}
Let $\mathcal{Q}$ be the set of all parallelepipeds with  sides parallel to some fixed linearly independent $n$ vectors in $\mathbb{R}^{n}$. Then
\begin{align}\label{sharp}
\| M_{\mathcal{Q}} f\|_{L^{p}}\geq \left(\frac{p}{p-1}\right)^{1/p} \| f\|_{L^{p}} \quad \text{for all} \quad f\geq 0, \; f\in L^{p}(\mathbb{R}^{n}).
\end{align}
\end{corollary}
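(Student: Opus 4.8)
\emph{Proof strategy.} The plan is to obtain Corollary~\ref{cor1} as a direct consequence of Theorem~\ref{smth} together with Lemma~\ref{ldens}, by optimizing in the density parameter $\lambda$. Fix linearly independent vectors $v_1,\dots,v_n\in\mathbb{R}^n$ and let $\mathcal{Q}$ be the family of all parallelepipeds with sides parallel to $v_1,\dots,v_n$. Two structural properties of $\mathcal{Q}$ are needed. First, $\mathcal{Q}$ is exhaustive: any compact $E\subset\mathbb{R}^n$ is bounded, hence --- writing points in the coordinates induced by the basis $v_1,\dots,v_n$ --- it is contained in a parallelepiped of the form $\{\sum_{i=1}^n t_iv_i:\ |t_i|\le R\}\in\mathcal{Q}$ for $R$ large enough. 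Second, by Lemma~\ref{ldens} the family $\mathcal{Q}$ is $\lambda$-dense for every $\lambda>1$.

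Given these two facts, Theorem~\ref{smth} applies to $\mathcal{Q}$ with every value $\lambda>1$ and yields, for each nonnegative $f\in L^p(\mathbb{R}^n)$,
\[
\|M_{\mathcal{Q}}f\|_{L^p}\ \geq\ \left(\frac{\lambda^p-1}{\lambda^p-\lambda}\right)^{1/p}\|f\|_{L^p}.
\]
The left-hand side is independent of $\lambda$, so we may take the supremum over $\lambda>1$ on the right; since $\lambda\mapsto\frac{\lambda^p-1}{\lambda^p-\lambda}$ is decreasing on $(1,\infty)$, that supremum equals the one-sided limit as $\lambda\to 1^+$.

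It remains to evaluate this limit. For $\lambda>1$ one has $\lambda^p-\lambda=\lambda(\lambda^{p-1}-1)>0$ (here $p-1>0$ is used), so the quantity is well defined, and dividing numerator and denominator by $\lambda-1$ gives
\[
\lim_{\lambda\to1^+}\frac{\lambda^p-1}{\lambda^p-\lambda}
=\lim_{\lambda\to1^+}\frac{1}{\lambda}\cdot\frac{(\lambda^p-1)/(\lambda-1)}{(\lambda^{p-1}-1)/(\lambda-1)}
=1\cdot\frac{p}{p-1}=\frac{p}{p-1},
\]
since the difference quotients of $\lambda\mapsto\lambda^p$ and $\lambda\mapsto\lambda^{p-1}$ at $\lambda=1$ converge to $p$ and $p-1$ respectively. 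By continuity of $t\mapsto t^{1/p}$, the constant $\left(\frac{\lambda^p-1}{\lambda^p-\lambda}\right)^{1/p}$ tends to $\left(\frac{p}{p-1}\right)^{1/p}$, which is precisely \eqref{sharp}.

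There is essentially no obstacle in the corollary itself: all of the mathematical content is already packaged in Theorem~\ref{smth} and Lemma~\ref{ldens}, and the only point deserving a word is the legitimacy of passing to the limit in $\lambda$, which is immediate because the inequality holds for each fixed $\lambda>1$ with a left-hand side that does not see $\lambda$. The genuinely substantive step lies upstream --- namely Lemma~\ref{ldens}, the assertion that $v_i$-parallel parallelepipeds admit, for every $\lambda>1$, a filtration satisfying conditions (1)--(7) --- and is what makes the sharp exponent range $\lambda>1$ available, so that the bound of Theorem~\ref{smth} can be pushed all the way to its supremal value $\left(\frac{p}{p-1}\right)^{1/p}$.
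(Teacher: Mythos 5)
Your proof is correct and follows the same route as the paper: verify that the parallelepiped family is exhaustive and (by Lemma~\ref{ldens}) $\lambda$-dense for every $\lambda>1$, apply Theorem~\ref{smth} for each such $\lambda$, and let $\lambda\to 1^+$ to get the constant $\bigl(\frac{p}{p-1}\bigr)^{1/p}$. You spell out the exhaustiveness check and the evaluation of the limit a bit more explicitly than the paper does (and incidentally avoid the small typo in the paper, which writes $\lambda-1$ where $\lambda^p-1$ is meant), but the argument is essentially identical.
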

We wonder if the constant $\bigl(\frac{p}{p-1}\bigl)^{1/p}$ is the best possible in Corollary~\ref{cor1}. In~\cite{ler} Lerner obtained (\ref{sharp}) in one dimensional case, i.e., $n=1$. We think that the argument presented in~\cite{ler} (see also Remark~\ref{sun} in Section~\ref{belmanchik}) does not extend to high dimensions $n>1$.

\bigskip

 It is not difficult to see that the set of dyadic cubes $\mathcal{Q}$   in $\mathbb{R}^{n}$ is $2^{n}$-dense, and this is the smallest $\lambda$ one can choose. In this case we recover the inequality proved by  Melas, Nikolidakis and Stavropoulos (see~\cite{MNS1}).
 \begin{corollary}
 Let $\mathcal{Q}$ be a set of dyadic cubes in $\mathbb{R}^{n}$. Then
  \begin{align*}
\| M_{\mathcal{Q}} f\|_{L^{p}}\geq \left(\frac{2^{np}-1}{2^{np}-2^{n}}\right)^{1/p} \| f\|_{L^{p}} \quad \text{for all} \quad f\geq 0, \; f\in L^{p}(\mathbb{R}^{n}).
\end{align*}
\end{corollary}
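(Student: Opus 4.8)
The final statement is the corollary about dyadic cubes, which follows from Theorem~\ref{smth} once we verify that the family of dyadic cubes in $\mathbb{R}^n$ is $2^n$-dense and exhaustive. The plan is therefore to check these two structural facts and then simply substitute $\lambda = 2^n$ into the bound of Theorem~\ref{smth}.

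First I would verify exhaustiveness: any compact $E \subset \mathbb{R}^n$ is bounded, hence contained in some large cube $[-2^N, 2^N)^n$, which is a dyadic cube for $N$ large enough. Next, for the $\lambda$-density with $\lambda = 2^n$, given a dyadic cube $Q$ and a nonnegative locally integrable $f$, I would build the filtration $\{\mathcal{F}_m\}_{m \geq 0}$ by taking $\mathcal{F}_m$ to be the set of dyadic subcubes of $Q$ of side length $2^{-m}\ell(Q)$; the children $\mathrm{ch}(P)$ of a cube $P \in \mathcal{F}_m$ are its $2^n$ dyadic subcubes of half the side length. Properties 1--6 in the definition are immediate: $\mathcal{F}_0 = \{Q\}$, each $\mathcal{F}_m$ is finite, the subcubes tile $Q$, they overlap only on a null set, each level refines the previous one, and the diameters shrink like $2^{-m}$. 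For property 7, if $R \in \mathrm{ch}(P)$ then $|P| = 2^n |R|$, so
\begin{align*}
\av{f}{R} = \frac{1}{|R|}\int_R f \leq \frac{1}{|R|}\int_P f = \frac{2^n}{|P|}\int_P f = 2^n \av{f}{P},
\end{align*}
which is exactly the condition with $\lambda = 2^n$. Hence the dyadic cubes form a $2^n$-dense, exhaustive family.

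Applying Theorem~\ref{smth} with $\lambda = 2^n$ then yields
\begin{align*}
\| M_{\mathcal{Q}} f\|_{L^{p}} \geq \left(\frac{(2^n)^p - 1}{(2^n)^p - 2^n}\right)^{1/p} \|f\|_{L^p} = \left(\frac{2^{np}-1}{2^{np}-2^{n}}\right)^{1/p}\|f\|_{L^p}
\end{align*}
for all nonnegative $f \in L^p(\mathbb{R}^n)$, which is the claimed inequality. There is no real obstacle here; the only point requiring a moment's care is that the constant is only nontrivial (i.e. strictly bigger than $1$) because $2^n > 1$, and that one cannot improve it within this framework since $2^n$ is the smallest dilation constant for which the dyadic family is $\lambda$-dense — a child of a dyadic cube can genuinely carry the full average mass of its parent when $f$ is supported on that child, so property 7 fails for any $\lambda < 2^n$.
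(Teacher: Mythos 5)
Your overall strategy is exactly the one the paper has in mind: verify that the dyadic grid is $2^n$-dense (and exhaustive) and then plug $\lambda = 2^n$ into Theorem~\ref{smth}. The $2^n$-density verification is correct, as is the observation that $2^n$ is optimal for this family, since concentrating $f$ on one child of $P$ forces $\av{f}{R} = 2^n\av{f}{P}$.

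There is, however, a concrete error in your exhaustiveness check. The cube $[-2^N, 2^N)^n$ is \emph{not} a dyadic cube for any $N$: writing it as $[2^k m, 2^k(m+1))^n$ would force $2^k = 2^{N+1}$ and $m = -\tfrac12$, which is not an integer. More fundamentally, the standard dyadic grid on $\R^n$ is \emph{not} exhaustive in the sense of the paper, because the coordinate hyperplanes through the origin are boundaries of dyadic cubes at every scale; no single dyadic cube can contain, say, both $(-1,0,\dots,0)$ and $(1,0,\dots,0)$. This is a technicality the paper itself glosses over. The standard fix is to decompose $\R^n$ into its $2^n$ closed coordinate orthants $O_i$ and write $f = \sum_i f\chi_{O_i}$; since dyadic cubes never cross orthant boundaries, $M_{\mathcal Q}f = M_{\mathcal Q}(f\chi_{O_i})$ on the interior of $O_i$, each orthant is exhausted by the increasing sequence of dyadic cubes anchored at the origin, and one applies the estimate on each orthant and sums the $p$-th powers. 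Alternatively, one can note that Lemma~\ref{ml} applied to a sequence of dyadic cubes $S_j$ whose side lengths tend to infinity and which eventually contain any fixed compact set (not necessarily nested) still yields the conclusion of Theorem~\ref{smth} in the limit. Either way the corollary is salvaged, but as written your exhaustiveness argument does not hold.
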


\section{Uncentered maximal function operator and convex bodies}\label{unc}
In this section we present a simple  proof of  Theorem~\ref{mth} in the case $k=1$.

Assume that $f \geq 0$ is continuous with compact support. Fix $t >0$ and consider the set
\begin{align*}
\mathcal{K}(t) = \left \{  K \in \mathcal{K} : \int_{K} f = t|K| \right\}.
\end{align*}
Clearly for each $t>0$ the set of the centers of $K$ from $\mathcal{K}(t)$ belong to a bounded subset of $\mathbb{R}^{n}$. We apply the Besicovich covering lemma  (see Lemma~\ref{bcl} in the Appendices) to extract a countable subfamily $K_{t,j}\in \mathcal{K}(t)$ so that the function
\begin{align*}
\psi(x,t) = \sum_{j}\chi_{K_{t,j}}(x)
\end{align*}
satisfies the following properties:

\begin{itemize}
\item[1)] For all $x \in \mathbb{R}^{n}$, $t>0,$ we have $\psi(x,t)\leq B(n)$ with some constant $B(n)$ depending only on the dimension $n$;
\item[2)] If $t>M_{1}f(x)$ then $\psi(x,t)=0$;
\item[3)] If $f(x) >t,$ then $\psi(x,t) \geq 1$;
\item[4)] For every $t >0$, we have $\int_{\mathbb{R}^{n}} t\psi(x,t)dx = \int_{\mathbb{R}^{n}}\psi(x,t)f(x)dx$;
\end{itemize}

The first property follows from Lemma~\ref{bcl}  where $B(n)$ is a Besicovich constant. This number is independent of $t$ and depends only on the dimension $n$.

For the second property, if $t > M_{1}f(x)$ then no $K_{t,j}$ contains $x$. Indeed, otherwise if some $K_{t,\ell}$ contains $x$ then $M_{1}f(x) \geq \av{f}{K_{t,\ell}} = t$ by the choice of the family $\mathcal{K}(t)$.

To verify the third property assume $f(x)>t$.  Let $K(x)$ be a shift  of $K$ centered at $x$. By the intermediate value theorem the continuous function $p(s) =\av{f}{sK(x)}$ attains value $t$ for some finite positive number $s^{*}$. Then by the choice of the family $K_{t,j}$ there exists $K_{t,\ell}$ which contains the center of $s^{*}K(x)$, i.e., $x$, and the property follows.

The fourth property follows  immediately from the fact that $\int_{\mathbb{R}^{n}}t  \chi_{K_{t,j}}=\int_{\mathbb{R}^{n}}\chi_{K_{t,j}}f(x) dx$ for all $j$ and $t>0$.

Now the last property, after multiplying both sides by $t^{p-2}$ and integrating with respect to $t$ over the ray $(0,\infty)$, yields the equality:
\begin{align}\label{gr}
\int_{\mathbb{R}^{n}}\int_{0}^{\infty}t^{p-1}\psi(x,t)dtdx = \int_{\mathbb{R}^{n}}\int_{0}^{\infty}t^{p-2} \psi(x,t) f(x) dtdx.
\end{align}

On the left hand side of (\ref{gr}) we can restrict the integration with respect to $t$ to $[0,M_{1}f(x)]$ (by property 2). We will estimate the right hand side from below by restricting the inner integration to the interval $[0,f(x)]$. The obtained inequality (together with the properties 2 and 3) justifies the following chain of inequalities:
\begin{align*}
&\frac{B(n)}{p}(\| M_{1} f\|_{L^{p}}^{p} - \| f\|_{L^{p}}^{p})\geq \int_{\mathbb{R}^{n}}\int_{0}^{M_{1}f(x)}t^{p-1}\psi(x,t)dtdx - \int_{\mathbb{R}^{n}}\int_{0}^{f(x)}t^{p-1}\psi(x,t)dtdx \geq\\
 &\int_{\mathbb{R}^{n}}\int_{0}^{f(x)}t^{p-1}\psi(x,t)\left(\frac{f(x)}{t} -1\right)dtdx \geq \int_{\mathbb{R}^{n}}\int_{0}^{f(x)}t^{p-1}\left(\frac{f(x)}{t}-1 \right)dtdx = \frac{\|f\|_{L^{p}}^{p} }{p(p-1)}.
\end{align*}
Thus we obtain (\ref{maxin}) with the constant $A(p,n,1)=\bigl(1+\tfrac{1}{(p-1)B(n)} \bigl)^{1/p}$.

\section{Almost centered maximal function operator and convex bodies}\label{acen}
In this section we work with the operator $M_{\lambda}$ for $\lambda \in (0,1)$. Assume that $f \geq 0$ is continuous with compact support. Fix $ t >0$,  and consider the family of sets
\begin{align*}
\mathcal{K}(t) = \left\{  K \in \mathcal{K}\, : \, \av{f}{K}= t \right\}
\end{align*}
as before. Once again, we use the Besicovitch covering lemma (Lemma~\ref{bcl} below) to extract the family $K_{t,j}$ so that the sets $K_{t,j}$ cover the centers of the sets  in $\mathcal{K}(t)$. Set $\psi(x,t)=\sum_{j} \chi_{K_{t,j}}(x)$.

In precisely the same manner as in Section \ref{unc}, we notice the following properties:
\begin{itemize}
\item[1)] For all $x \in \mathbb{R}^{n}$, $t>0,$ we have  $\psi(x,t) \leq B(n)$  with some constant $B(n)$ depending only on the dimension $n$;
\item[2)] If $f(x) > t,$ then $\psi(x,t) \geq 1$;
\end{itemize}


Unfortunately it is not true that if $t> M_{\lambda}f(x)$ then $\psi(x,t)=0$, and therefore we cannot repeat the proof as in the previous section.  However, to compensate for the lack  of this property we  will prove the following dichotomy.
\begin{lemma}\label{either}
For every $\varepsilon \in (0,1)$  there exists $\eta>0$ such that for every $K \in \mathcal{K}$ and any function $f\geq 0$,  either
\begin{align}
&\int_{K}M_{\lambda}f(x)dx \geq  (1+\eta) \int_{K} f(x)dx, \quad \text{or} \label{pmd}\\
&M_{\lambda} f(x)  \geq (1-\eps) \frac{1}{|K|}\int_{K} f(x)dx \quad \text{on} \quad (1-\varepsilon)K. \label{mmd}
\end{align}

\end{lemma}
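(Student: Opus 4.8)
The plan is to argue by contradiction via a compactness/normalization argument. Suppose the dichotomy fails: then there exist $\eps \in (0,1)$ and sequences $K_m \in \mathcal{K}$, $f_m \geq 0$ such that both
\begin{align}\label{neg1}
\int_{K_m} M_\lambda f_m < \bigl(1+\tfrac1m\bigr)\int_{K_m} f_m
\end{align}
and \eqref{mmd} fails for $(K_m,f_m)$, i.e.\ there is a point $x_m \in (1-\eps)K_m$ with $M_\lambda f_m(x_m) < (1-\eps)\av{f_m}{K_m}$. By translating, dilating, and scaling $f_m$ (the inequalities in the statement are invariant under $x \mapsto Ax+b$ applied simultaneously to $K$ and the argument of $f$, and under $f \mapsto cf$), I may assume $K_m = K$ is a fixed body and $\av{f_m}{K} = 1$ for all $m$. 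Then \eqref{neg1} says $\int_K M_\lambda f_m \to |K|$, while $\int_K M_\lambda f_m \geq \int_K f_m = |K|$ always (since $M_\lambda f_m \geq \av{f_m}{K}$ on $\lambda K$... no: I need $M_\lambda f_m(x) \ge \av{f_m}{K}=1$ at least on $\lambda K$, so a cheap lower bound on $\int_K M_\lambda f_m$ is $|\lambda K|$, not quite $|K|$; the correct universal bound is $\int_K M_\lambda f_m \ge \int_K f_m$ because for every $x \in K$ there is some translate/dilate of $K$ through $x$ that... this needs a moment's care, see below).

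The heart of the argument is then: normalize and pass to a limit. After normalization the functions $f_m$ have $\av{f_m}{K}=1$; I would like to extract a limiting function $f$. The natural move is to note that $M_\lambda f_m$ is essentially bounded on $K$ in an integral sense, push the mass-constraint and the failure of \eqref{mmd} to the limit, and derive that the limit object $f$ satisfies $\int_K M_\lambda f = \int_K f$ yet $M_\lambda f < (1-\eps)\av{f}{K}$ somewhere in the closed set $(1-\eps)K$ — and then show directly that $\int_K M_\lambda f = \int_K f$ forces $M_\lambda f = \av{f}{K}$ a.e.\ on $K$ (this is the rigidity statement analogous to ``$M_0 f = f \Rightarrow f$ harmonic'' discussed in the introduction), contradicting the pointwise bound on $(1-\eps)K$. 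To make ``$\int_K M_\lambda f = \int_K f$'' meaningful one first records the pointwise inequality $M_\lambda f \geq \av{f}{K}$ on all of $(1-\text{something})K$, or more precisely that $\int_K M_\lambda f \ge \int_K f$: indeed for a.e.\ $x\in K$ one can find $S\in\mathcal K$ with $\lambda S \ni x$ and $\av fS$ close to $f(x)$ by shrinking $S$ (Lebesgue differentiation), so $M_\lambda f \ge f$ a.e., hence $\int_K M_\lambda f\ge\int_K f$; equality then means $M_\lambda f = f$ a.e.\ on $K$, and combined with $M_\lambda f \ge \av fK$ on $\lambda K$ and an averaging/continuity argument one upgrades $f$ to be constant on $K$ — but a constant $c$ has $M_\lambda f \equiv c$ on $(1-\eps)K$, contradicting $M_\lambda f(x_\infty) < (1-\eps)c$.

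For the compactness step the technical point is which topology to use: the $f_m$ need not be uniformly integrable, so I would instead work with the associated averages or truncate. A clean route: fix $\delta>0$; on $(1-\delta)K$ the quantity $M_\lambda f_m$ is comparable to an average of $f_m$ over a fixed bounded neighborhood $K^*$ of $K$, so $\int_{(1-\delta)K} M_\lambda f_m$ controls $\tfrac{1}{|K^*|}\int_{K^*} f_m$ up to constants; since $\int_K M_\lambda f_m$ stays bounded, the $f_m$ are bounded in $L^1(K^*)$, so (after passing to a subsequence) $f_m \rightharpoonup \mu$ weakly-$*$ as measures on $\overline{K^*}$. One then checks that $M_\lambda$ is lower semicontinuous under this convergence in the relevant sense: $\liminf_m M_\lambda f_m(x) \geq M_\lambda \mu(x)$ for the measure $\mu$, and $\limsup_m \int_K M_\lambda f_m \le$ something controlled by $\mu$, so the equality $\int_K M_\lambda \mu = \mu(K)$ passes to the limit. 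Replacing $\mu$ by its absolutely continuous part (or working with measures throughout, defining $M_\lambda\mu(x) = \sup \mu(S)/|S|$) the rigidity argument above goes through with $f$ replaced by $\mu$, and a constant-density measure on $K$ again contradicts the failure of \eqref{mmd}. I expect the main obstacle to be exactly this semicontinuity bookkeeping — controlling $\int_K M_\lambda f_m$ from above by a limit quantity while keeping the ``almost centered'' condition $\lambda S \ni x$ intact under the weak limit, since a minimizing sequence of bodies $S$ realizing $M_\lambda f_m(x_m)$ could degenerate (collapse to a point, or escape); the parameter $\lambda \in (0,1)$ being bounded away from $0$ and $1$ is what should prevent this, and making that quantitative (or at least using sequential compactness of the relevant bodies $S$) is where the real work lies.
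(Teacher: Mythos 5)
Your overall strategy matches the paper's: argue by contradiction, use the affine invariance to fix the body $K$ and normalize $\av{f_m}{K}=1$, pass to a limit, establish a rigidity result showing the limit must be constant, and then derive a contradiction with the assumed failure of \eqref{mmd}. You also correctly flag the two genuine difficulties — obtaining compactness for the $f_m$, and proving the rigidity ``$\int_K M_\lambda f = \int_K f \Rightarrow f$ constant'' — so your diagnosis of where the work lies is accurate. But the proposal does not actually close either gap, and the workaround you sketch for the first one is not as innocuous as it looks.

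On the compactness step: you observe, correctly, that the $f_m$ need not be uniformly integrable a priori, and propose to pass to weak-$*$ limits of measures. The paper's key idea, which you are missing, is that uniform integrability \emph{does} follow, via Stein's $L\log L$ inequality (Lemma \ref{stein} in the paper): there is $C=C(n)$ so that $\int_{K} f \ln(\max\{1,f\}) \leq C\int_{K} M_0(f)$ for $\av{f}{K}=1$. Combined with the near-equality $\int_K M_\lambda f_m < (1+\tfrac1m)\int_K f_m$, this gives a uniform entropy bound $\int_K f_m \ln(\max\{1,f_m\}) \leq 2C$, hence uniform integrability, hence (Dunford--Pettis) a weak $L^1$ limit $f$. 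This is essential: if you instead only extract a weak-$*$ limit $\mu$ of measures, you must worry about a singular part (mass concentrating on a lower-dimensional set), and then the rigidity argument — which requires mollifying $\mu$ and reading off that its density is constant — does not go through in the form you state. You also face a real lower-semicontinuity issue: $\liminf_m M_\lambda f_m(x) \geq M_\lambda \mu(x)$ requires care about boundaries of the test bodies $S$, whereas for weak $L^1$ convergence of functions the same pointwise inequality is immediate by testing against $\tfrac1{|S|}\chi_S \in L^\infty$. So the measure route is not a harmless substitution; it reintroduces exactly the pathology Stein's inequality rules out.

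On the rigidity step: your sketch (``averaging/continuity argument upgrades $f$ to constant'') is too vague to check. The paper's argument is concrete: mollify $f_r = f * \varphi_r$, observe $M_\lambda f_r \leq \varphi_r * M_\lambda f = \varphi_r * f = f_r$ on the shrunken body, so $M_\lambda f_r = f_r$ there, and then a second-order Taylor expansion around a putative point $x_0$ with $\nabla f_r(x_0)\neq 0$ shows $f_r \geq f_r(x_0) - O(s^2)$ on a whole small $\lambda$-scaled body around $x_0$, which is incompatible with a nonzero gradient. This step is where the smallness of $\lambda<1$ is actually used; your final paragraph's worry about ``collapsing bodies'' in the $\sup$ is not what matters — what matters is that $\lambda s K'$ is a \emph{neighborhood} of $x_0$, not just the point $x_0$ (which is exactly what distinguishes $\lambda>0$ from $\lambda=0$). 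Finally, your closing contradiction should be phrased for the original sequence, not a limiting $x_\infty$: once $f_j \rightharpoonup 1$, cover $(1-\eps)K$ by finitely many $\lambda K_\ell$ with $K_\ell \subset K$ and use $\av{f_j}{K_\ell}\to 1$ to get $M_\lambda f_j \geq 1-\eps$ on $(1-\eps)K$ for large $j$; passing a pointwise lower bound to the limit at a single point $x_\infty$ is not valid since $M_\lambda$ is not upper semicontinuous along the sequence.
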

Before we proceed to the proof of the lemma let us show how it implies the desired estimate.  For each $t>0$, the family  $\{  K_{t, j}\}$ can be divided into two subfamilies $\{ K'_{t,j}\}$ and $\{ K''_{t,j}\}$ so that  the sets $K'_{t,j}$ satisfy (\ref{pmd}), and the sets $K''_{t,j}$ satisfy (\ref{mmd}). Set
\begin{align*}
\psi_{1}(x,t) = \sum_{j} \chi_{K'_{t,j}}(x),  \quad  \quad \psi_{2}(x,t) = \sum_{j} \chi_{K''_{t,j}}(x) \quad \text{and} \quad \psi^{\eps}_{2}(x,t) = \sum_{j} \chi_{(1-\eps)K''_{t,j}}(x).
\end{align*}

Clearly $\psi_{1} +\psi_{2} \geq 1$ if $f(x)>t$, and $\psi_{1}, \psi_{2} \leq B(n)$ for all $t >0$ and $x \in \mathbb{R}^{n}$. We notice that
\begin{align}\label{vspomnil}
\text{if} \quad t > \lambda^{-n} M_{\lambda} f(x) \quad \text{then} \quad \psi_{1}(x,t)=0.
\end{align}
Indeed, otherwise there exists a set $K'_{t,\ell} \in \{ K'_{t,j}\}$ containing $x$ such that
\begin{align*}
t = \frac{1}{|K'_{t,\ell}|} \int_{K'_{t,\ell}} f(x)dx\leq  \frac{\lambda^{-n}}{|\lambda^{-1} K'_{t,\ell}|}\int_{\lambda^{-1}K'_{t,\ell}} \leq \lambda^{-n} M_{\lambda}f(x).
\end{align*}
Note that (\ref{pmd}) implies the following inequality
\begin{align}\label{krivoi}
\int_{\mathbb{R}^{n}}\int_{0}^{\infty} (M_{\lambda}f(x)  - f(x)) \psi_{1}(x,t) t^{p-2}dt dx \geq \eta  \int_{\mathbb{R}^{n}} \int_{0}^{\infty} f(x) \psi_{1}(x,t) t^{p-2}dtdx.
\end{align}

Since $\psi_1\leq B(n)$, and $M_{\lambda} f \geq f$, we have that
\begin{equation}\begin{split}\nonumber\frac{B(n)}{(p-1)\lambda^{-n(p-1)}}(\|M_{\lambda}f\|_{p}^{p} - \|f\|_{p}^{p})&\geq B(n) \int_{\mathbb{R}^{n}} [M_{\lambda}f(x)  - f(x)]\frac{(M_{\lambda}f)^{p-1}}{(p-1)\lambda^{-n(p-1)}} dx\\&\geq \int_{\mathbb{R}^{n}}\int_{0}^{\lambda^{-n}M_{\lambda}f(x)} [M_{\lambda}f(x)  - f(x)] \psi_{1}(x,t) t^{p-2}dt dx.
\end{split}\end{equation}
Notice that  (\ref{vspomnil}) enables us to extend the integration over $t$ in the inner integral on the right hand side, and so we derive from  (\ref{krivoi}) that
\begin{align}
\frac{B(n)}{\eta(p-1)\lambda^{-n(p-1)}}(\|M_{\lambda}f\|_{p}^{p} - \|f\|_{p}^{p})&\geq \frac{1}{\eta}\int_{\mathbb{R}^{n}}\int_{0}^{\infty} [M_{\lambda}f(x)  - f(x)] \psi_{1}(x,t) t^{p-2}dt dx \nonumber\\
&\geq  \int_{\mathbb{R}^{n}} \int_{0}^{\infty} f(x) \psi_{1}(x,t) t^{p-2}dtdx. \label{pirbolo}
\end{align}
Unfortunately we do know that $\psi_{1} \geq 1$ on the set $\{t<f(x)\}$, but instead only that $\psi_{1} +\psi_{2} \geq 1$, and so we need to invoke the function $\psi_{2}$.

We have $\int_{K''_{t,\ell}} t dx= \int_{K''_{t,\ell}}f(x)dx$, and so $(1-\varepsilon)^{-n} \int_{(1-\varepsilon)K''_{t,\ell}} tdx = \int_{K''_{t,\ell}}f(x)dx$.   Tonelli's theorem  therefore yields that
\begin{align}\label{meore}
(1-\varepsilon)^{-n} \int_{\mathbb{R}^{n}} \int_{0}^{\infty} \psi^{\eps}_{2}(x,t) t^{p-1} dtdx  =   \int_{\mathbb{R}^{n}}\int_{0}^{\infty} f(x)\psi_{2}(x,t)t^{p-2}dtdx.
\end{align}
Since for each $x \in (1-\varepsilon)K''_{t,j}$ we have $\frac{M_{\lambda}f(x)}{1-\varepsilon}>t$ then (\ref{meore}) can be rewritten as follows
\begin{align}\label{ragaca}
(1-\varepsilon)^{-n}\int_{\mathbb{R}^{n}} \int_{0}^{\frac{M_{\lambda} f(x)}{1-\varepsilon}}  \psi^{\eps}_{2}(x,t) t^{p-1} dtdx  =  \int_{\mathbb{R}^{n}}\int_{0}^{\infty} f(x)\psi_{2}(x,t)t^{p-2}dtdx.
\end{align}
In the left hand side of (\ref{ragaca}) we estimate $\psi^{\eps}_{2}$ from above by $  \psi_{1}+\psi_{2}$.  After combining the resulting inequality together with (\ref{pirbolo}) we obtain
\begin{align*}
&\frac{B(n)}{\eta(p-1)\lambda^{-n(p-1)}}(\|M_{\lambda} f\|^{p}_{p}-\|f\|_{p}^{p})+ (1-\varepsilon)^{-n} \int_{\mathbb{R}^{n}}\int_{0}^{\frac{M_{\lambda}f(x)}{1-\varepsilon}} t^{p-1}(\psi_{2}+\psi_{1})  \\
&\geq \int_{\mathbb{R}^{n}}\int_{0}^{\infty}f(x)(\psi_{1}+\psi_{2})t^{p-2}dtdx  \geq \int_{\mathbb{R}^{n}}\int_{0}^{f(x)}f(x)(\psi_{1}+\psi_{2})t^{p-2}dtdx.
\end{align*}
We now subtract the term $\int_{\R^n}\int_0^{f(x)}t^{p-1}(\psi_1+\psi_2)dtdx$ from  both sides of  previous inequality, which yields
\begin{align*}
&\frac{B(n)}{\eta (p-1)\lambda^{-n(p-1)}}(\|M_{\lambda} f\|^{p}_{p}-\|f\|_{p}^{p})+ (1-\varepsilon)^{-n}\int_{\mathbb{R}^{n}}\int_{f(x)}^{\frac{M_{\lambda}f(x)}{1-\varepsilon}} t^{p-1}(\psi_{2}+\psi_{1}) dtdx\\
&+((1-\eps)^{-n}-1)\int_{\mathbb{R}^{n}}\int_{0}^{f(x)}t^{p-1}(\psi_{1}+\psi_{2})dtds\geq  \int_{\mathbb{R}^{n}}\int_{0}^{f(x)}\left(\frac{f(x)}{t}-1\right) (\psi_{1}+\psi_{2})t^{p-1}dtdx.
\end{align*}
As in the previous section, we estimate the left hand side of the previous display from above using the inequality  $B(n) \geq \psi_{1}+\psi_{2}$, and the right hand side from below using that estimate $\psi_{1}+\psi_{2}\geq 1$ in the domain of integration (notice that $t<f(x)$). Finally we obtain
\begin{align*}
\frac{B(n)}{\eta (p-1)\lambda^{-n(p-1)}}&(\|M_{\lambda} f\|^{p}_{p}-\|f\|_{p}^{p})+ \frac{B(n) (1-\varepsilon)^{-n}}{p}\left(  \frac{\| M_{\lambda} f \|_{p}^{p}}{(1-\varepsilon)^{p}} - \|f\|_{p}^{p}\right) \\
&+\frac{((1-\eps)^{-n}-1)B(n)}{p} \| f\|_{L^{p}}^{p}  \geq \frac{\| f\|_{p}^{p}}{p(p-1)},
\end{align*}
which, when rearranged, becomes
\begin{align*}
\| M_{\lambda} f\|_{p}^{p} \geq \left(1+\frac{1-(1-\varepsilon)^{-n-p}+\frac{1}{B(n) (p-1)}}{\frac{p}{\eta (p-1) \lambda^{-n(p-1)}} +(1-\varepsilon)^{-n-p}} \right)\| f\|^{p}_{p}.
\end{align*}
Choosing $\varepsilon>0$ sufficiently small so that $1-(1-\varepsilon)^{-n-p}+\frac{1}{B(n) (p-1)} >0$   we obtain the desired estimate.

It remains to prove Lemma~\ref{either}.

\bigskip

{\em Proof of Lemma~\ref{either}.}
Since the maximal function $M_{\lambda}f(x)$ commutes with dilations and shifts,  i.e., $(M_{\lambda} f(\alpha \cdot +\beta))(x)=(M_{\lambda}f(\cdot))(\alpha x+\beta)$, it is enough to prove the lemma for some fixed $K_0 \in \mathcal{K}$ with $|K_0|=1$. Assume to the contrary that there exists $\varepsilon_{0}\in (0,1)$ and a sequence of non-negative functions $f_j$ that satisfy
$\int_{K_0}M_{\lambda}f_{j}(x)dx < (1+\tfrac{1}{j}) \int_{K_0}f_{j} dx$ while, for every $j$, the inequality $M_{\lambda} f_{j} \geq (1-\varepsilon_{0}) \frac{1}{|K_0|}\int_{K_0} f_{j} dx$ does not hold on $(1-\varepsilon_{0})K_0$.  By considering $\frac{\chi_{K_0}f_{j}}{\int_{K_0} f_{j}}$ we can assume that $\int_{K_0} f_{j}=1$ and $f_j$ is supported in $K_0$.

With a view to passing to a limit, we first claim that the sequence $f_j$ is uniformly integrable on $K_0$.

To this end, recall Stein's inequality \cite{S}, which states that there is a constant $C>0$ depending only on $n$, such that if $f$ is a non-negative function with $\int_{K_0} f=1$, then
$$\int_{K_0} f\ln(\max\{ 1, f\})dx\leq C\int_{K_0} M_{\lambda}(f) dx.
$$
For the benefit of the reader we include a proof in an appendix (see Lemma \ref{stein}).

Returning to our sequence $f_j$, we find that  $\int_{K_0}f_{j} \ln(\max\{1, f_j\})dx \leq C(1+\frac{1}{j})\leq 2C$ and this readily yields that the sequence $\{ f_{j}\}$ is uniformly integrable. 

Consequently, with the aid of the Dunford--Pettis theorem, we may (by passing to a subsequence if necessary)  find $f \in L^{1}(K_0)$ such that $f_{j} \to f$ in the $\sigma(L^{1}(K_0), L^{\infty}(K_0))$ topology (i.e. the sequence $f_j$ converges weakly to $f$ over bounded functions).

It is clear that $\int_{K_0} f=1$. Further we have that $\liminf_j M_{\lambda} f_{j} \geq M_{\lambda} f$ a.e. on $K_0$, and so Fatou's lemma yields that $\int_{K_0} M_{\lambda}f dx \leq 1$.


The properties $\int_{K_0} f=1$ and $\int_{K_0} M_{\lambda} f \leq 1$ imply that $M_{\lambda} f = f$ almost everywhere on $K_0$.  We will show that $f=1$ almost everywhere on $K_0$ and that this will contradict  our assumption that the inequality $M_{\lambda} f_{j} \geq (1-\varepsilon_{0})$ fails to hold  on $(1-\varepsilon_{0})K_0$ for sufficiently large $j$.

Fix $r>0$. Set $f_{r} = f * \varphi_{r}$,  where $\varphi\geq 0$  is a smooth bump function supported on $B(0,1)$ such that $\int \varphi =1$, and $\varphi_{r}(x) =r^{-n}\varphi(\frac{x}{r})$.  If $f$ is non-constant (a.e.) on $K_0$, then we can find for arbitrarily small $r>0$ a point $x_0\in K_r=\{x\in K_0: \text{dist}(x,\partial K_0)>r\}$ so that $\nabla f_r(x_0)\neq 0$.
 
 Notice that $M_{\lambda} f_{r} \leq \varphi_{r} * M_{\lambda} f = \varphi_{r}*f = f_{r}$ on $K_r$, and therefore $M_{\lambda} f_{r} = f_{r}$ on $K_r$. 
 
 Take any set $K' \in \mathcal{K}$ centered at $x_{0}$.  Then (for instance by expanding $f_{r}$ in a Taylor series), we see that there is a constant $C>0$, that may depend on $r$, such that  $|\av{f_{r}}{sK'}-f_{r}(x_{0})| \leq C \| D^{2} f_{r}\|_{L^{\infty}(K_0)} s^{2}$ for all sufficiently small $s$.  But then provided that $s$ is small enough to ensure that $\lambda sK'\subset K_r$, we have that  $f_{r}(z)=M_{\lambda}f_r(z) \geq \av{f}{sK'}$ for all $z \in \lambda s K'$.  Therefore, for sufficiently small $s$, we have 
\begin{align}\label{smoothedlower}
f_{r}(z) \geq \av{f_{r}}{sK'} \geq f_{r}(x_{0})-C\| D^{2} f_{r}\|_{L^{\infty}(K_0)} s^{2}\text{ for every }z \in s\lambda K'.
\end{align}
This inequality already contradicts to our assumption that the gradient is not zero at point $x_{0}$.  We therefore conclude that $f=1$ on $K_0$. 

Consider $(1-\varepsilon_0)K_0$. There are a finite number of sets $K_{\ell} \subset K_0$ with $K_{\ell} \in \mathcal{K}$ such that the sets $\lambda K_{\ell}$ cover $(1-\varepsilon_0)K_0$. Since the sequence $f_j$ converges weakly to the constant function $1$ over $L^{\infty}(K_0)$, we have that $\av{f_{j}}{K_{\ell}} \to 1$ as $j \to \infty$ for each $\ell$.  Consequently, we can choose sufficiently large $j_{0}>0$  such that $\av{f_{j}}{K_{\ell}}\geq 1-\eps_0$ for every $\ell$ and every $j\geq j_0$.  Thus $M_{\lambda} f_{j} \geq (1-\varepsilon_{0})$  on $(1-\varepsilon)K_0$ for all $j\geq j_{0}$.   This final contradiction completes the proof of the lemma.  $\hfill \square$

\section{Bellman function approach}\label{belmanchik}
\subsection{The proof of Theorem~\ref{smth}.}
In this section we will prove Theorem~\ref{smth}. Given compactly supported continuous  $f\geq 0$, for any $Q \in \mathcal{Q}$ we set
\begin{align*}
(x_{Q}(f), y_{Q}(f), z_{Q}(f)) \ddf (\av{f}{Q}, \av{f^{p}}{Q}, \sup_{R \in \mathcal{Q} : \, R \supseteq Q}\av{f}{R}).
\end{align*}
Sometimes we will omit the variable $f$ and we just write $(x_{Q}, y_{Q}, z_{Q})$.
For a real number $a$ we set $(a)_{+} \ddf  \max\{a,0\}$. First we will  prove the following lemma.
\begin{lemma}\label{ml}
If the family $\mathcal{Q}$ is $\lambda$-dense, then for any $S \in \mathcal{Q}$ and any $p>1$ we have
\begin{align*}
\av{(M_{\mathcal{Q}}f)^{p}}{S}\geq z_{S}^{p}(f) + \frac{\lambda^{p}-1}{\lambda^{p}-\lambda}(y_{S}(f)-x_{S}(f)z_{S}^{p-1}(f))_{+}.
\end{align*}
\end{lemma}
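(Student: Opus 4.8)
The plan is to run an induction on the filtration $\{\mathcal{F}_n\}$ associated to $S$ and $f$ guaranteed by $\lambda$-density, using a one-step superadditivity estimate for the auxiliary function appearing on the right-hand side. Write $\Phi(x,y,z) \ddf z^p + c_\lambda (y - x z^{p-1})_+$ with $c_\lambda = \tfrac{\lambda^p-1}{\lambda^p-\lambda}$; the content of the lemma is that $\av{(M_{\mathcal{Q}}f)^p}{S} \geq \Phi(x_S,y_S,z_S)$. The first step is to observe that because $\{\mathcal{F}_n\}$ refines and exhausts $S$ with diameters shrinking to zero, $M_{\mathcal{Q}}f(u) \geq \lim_n \av{f}{P_n(u)}$ for a.e.\ $u$, where $P_n(u)\in\mathcal{F}_n$ is the piece containing $u$; combined with the Lebesgue differentiation theorem along the filtration one gets $M_{\mathcal{Q}}f \geq \max\{z_S, f\}$ a.e.\ on $S$ (the $z_S$ bound coming from the single set $S$ itself and all $R\supseteq S$). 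So it suffices to bound $\av{(\max\{z_S,f\})^p}{S}$ — or more precisely a martingale-type quantity — from below by $\Phi$, but in fact the cleanest route is the induction directly on $\Phi$.

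The key step is the \emph{one-step inequality}: if $P\in\mathcal{F}_n$ has children $R_1,R_2,\dots\in\mathrm{ch}(P)$ with weights $\alpha_i = |R_i|/|P|$ (so $\sum_i\alpha_i=1$), and we write $x_i = \av{f}{R_i}$, $y_i=\av{f^p}{R_i}$, then $\sum_i\alpha_i x_i = x_P$, $\sum_i\alpha_i y_i = y_P$, the children inherit $z_{R_i}=\max\{z_P, x_i\}$, and property 7 of $\lambda$-density gives $x_i \leq \lambda x_P$. I claim
\[
\sum_i \alpha_i\,\Phi(x_i,y_i,z_{R_i}) \;\geq\; \Phi(x_P,y_P,z_P).
\]
Granting this, a straightforward induction shows $\sum_{P\in\mathcal{F}_n}\tfrac{|P|}{|S|}\Phi(x_P,y_P,z_P) \geq \Phi(x_S,y_S,z_S)$ for every $n$; then letting $n\to\infty$ and using that $\av{f}{P_n(u)}\to f(u)$, $\av{f^p}{P_n(u)}\to f(u)^p$ a.e., while $z_{P_n(u)}\leq M_{\mathcal{Q}}f(u)$ and $(y-xz^{p-1})_+\to 0$ pointwise (since $y_P,x_P\to f(u),f(u)^p$ makes the bracket tend to $f(u)^p - f(u)\cdot z^{p-1}$, which is $\leq 0$ when $z\geq f(u)$... one must be slightly careful here), one recovers $\av{(M_{\mathcal{Q}}f)^p}{S}\geq \limsup_n \sum_P \tfrac{|P|}{|S|}z_{P}^p \geq \Phi(x_S,y_S,z_S)$ by dominated/Fatou arguments against the integrable bound $(M_{\mathcal{Q}}f)^p$.

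To prove the one-step inequality I would first reduce to two children: since $\Phi$ is affine in $y$ for fixed sign of the bracket and the relevant constraints ($\sum\alpha_i x_i = x_P$, $x_i\leq\lambda x_P$) are preserved under merging, a convexity/extreme-point argument lets us assume there are exactly two children, one with $x$-value as large as allowed. Then with two weights $\alpha,1-\alpha$ and values $x_1\le\lambda x_P$, $x_2$, one expands both sides using $z_{R_i}=\max\{z_P,x_i\}$. The term $\sum\alpha_i z_{R_i}^p \geq z_P^p$ trivially, and also $\sum\alpha_i z_{R_i}^p \geq z_P^p + (\text{something})\cdot(x_i - z_P)_+^p$ type gains must be combined with the $y$–linear part $c_\lambda(\sum\alpha_i y_i - \sum\alpha_i x_i z_{R_i}^{p-1}) = c_\lambda(y_P - \sum\alpha_i x_i z_{R_i}^{p-1})$, and the loss relative to $c_\lambda(y_P - x_P z_P^{p-1})$ is exactly $c_\lambda\sum\alpha_i x_i(z_{R_i}^{p-1}-z_P^{p-1})$, which is $c_\lambda\sum\alpha_i x_i(\max\{z_P,x_i\}^{p-1}-z_P^{p-1})$, nonzero only when $x_i>z_P$. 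The whole inequality then reduces to a scalar estimate in the variables $x_1/z_P$, $x_2/z_P$, $\alpha$ with the single constraint $\alpha x_1 + (1-\alpha)x_2 = x_P \leq x_i\ \text{(for some }i)\ /\lambda$, and the definition $c_\lambda = (\lambda^p-1)/(\lambda^p-\lambda)$ is precisely tuned so that the worst case — one child at the extreme $x_1=\lambda x_P$ with $z_P$ below it — gives equality. \textbf{The main obstacle} is exactly this scalar inequality: extracting the correct gain from the convexity of $t\mapsto t^p$ in the $z$–component and showing it dominates the $c_\lambda$–weighted loss from the $xz^{p-1}$ cross term, uniformly over the position of $z_P$ relative to the children's averages. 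Handling the case $z_P$ large (so no children exceed it, the brackets may vanish or stay negative) versus $z_P$ small requires splitting into cases, and one must also deal carefully with the $(\cdot)_+$ truncation, which is where a direct computation is unavoidable but should be short once the two-child reduction is in place.
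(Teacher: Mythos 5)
Your scaffold matches the paper's exactly: you introduce the same Bellman function $\Phi=B$, you formulate the same one-step concavity/superadditivity estimate over a single refinement step, you plan to iterate over $\mathcal{F}_n$, and you invoke the Lebesgue differentiation theorem to pass to the limit. Two details in the setup need correction or completion, though, and the heart of the matter is left open.

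First, you assert that the children inherit $z_{R_i}=\max\{z_P,x_i\}$. What one actually has is only $z_{R_i}\geq\max\{z_P,x_i\}$ (there could be an $R$ with $R_i\subsetneq R\subsetneq P$ contributing a larger average). To replace $z_{R_i}$ by $\max\{z_P,x_i\}$ in $\Phi$ one must check that $\Phi$ is nondecreasing in $z$ on the domain $0\leq x\leq z$; this reduces to $c_\lambda\leq\tfrac{p}{p-1}$, which holds because $c_\lambda=\tfrac{\lambda^p-1}{\lambda^p-\lambda}$ is decreasing in $\lambda$ for $\lambda>1$ and equals $\tfrac{p}{p-1}$ in the limit $\lambda\to 1^+$. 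This monotonicity check is a genuine step, not a triviality.

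Second and more importantly: you correctly identify the one-step inequality as the crux, but you call it ``the main obstacle'' and only outline a plan (reduction to two children by an extreme-point argument, then a scalar case analysis in $x_1/z_P$, $x_2/z_P$, $\alpha$). That plan is never carried out, so as written the proof has a gap precisely where the work is. The paper's route is cleaner and avoids the two-child reduction entirely. After replacing $z_{R_i}$ by $\max\{x_{R_i},z_Q\}$, one drops the positive part on the right side of the one-step inequality (when $y_Q - x_Q z_Q^{p-1}>0$; the other case is immediate), uses linearity in $y$ to cancel the $y$-terms via $\sum\alpha_i y_i=y_Q$, divides through by $z_Q^p$, and after replacing $\tfrac{x_{R_i}}{z_Q}$ by $s_i:=\max\{\tfrac{x_{R_i}}{z_Q},1\}\in[1,\lambda]$ one finds the inequality is exactly a weighted average of the elementary chord estimate
\begin{align*}
\frac{\lambda^p-1}{\lambda^p-\lambda}\bigl(s^p-s\bigr)\leq s^p-1,\qquad 1\leq s\leq\lambda,
\end{align*}
which follows from the convexity of $s\mapsto s^p$ (its graph on $[1,\lambda]$ lies below the chord). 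This is precisely where the specific value of $c_\lambda$ enters, and it is the step your sketch stops short of. If you carry out this reduction instead of the two-child approach, your argument becomes complete.
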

First let us explain that the lemma implies Theorem~\ref{smth}. Indeed, since $\mathcal{Q}$ is exhaustive we can find a sequence of sets $S_{0} \subset S_{1} \subset \ldots$ so that $S_{j} \in \mathcal{Q}$ for all $j\geq 0$, and  for any compact set $E \in \mathbb{R}^{n}$ there exists $S_{\ell}$ such that $E \subseteq S_{\ell}$. We apply the lemma to the sets $S_{j}$:
\begin{align}\label{predel}
\int_{S_{j}}(M_{\mathcal{Q}}f)^{p} \geq |S_{j}|z_{S_{j}}^{p}(f)+\frac{\lambda^{p}-1}{\lambda^{p}-\lambda}\left( \int_{S_{j}} f^{p}- |S_{j}|x_{S_{j}}(f) z_{S_{j}}^{p-1}(f) \right)_{+}.
\end{align}
As $j \to \infty$, the left hand side of (\ref{predel}) tends to $\| M_{\mathcal{Q}} f\|_{L^{p}}^{p}$. On the right hand side of (\ref{predel}) we have $|S_{j}|z_{S_{j}}^{p}(f) \to 0$, $|S_{j}| x_{S_{j}}(f)z_{S_{j}}^{p-1} \to 0$, and $\int_{S_{j}}f^{p} \to \|f\|_{L^{p}}^{p}$ and the Theorem follows. Now we return to the proof of Lemma~\ref{ml}.

\begin{proof}
For $0 \leq x \leq z, \lambda >1$ and $p>1$ we define the Bellman function as follows
\begin{align*}
B(x,y,z; \lambda) = z^{p} + \frac{\lambda^{p}-1}{\lambda^{p}-\lambda}(y-xz^{p-1})_{+}.
\end{align*}
 Fix some locally integrable $f \geq 0$ such that $\int f \neq 0$. Pick any $Q \in \mathcal{Q}$, $|Q|>0$. 
By $\lambda$-density we have  $Q = \cup_{P \in \mathrm{ch}(Q)}P$ and $\av{f}{P}\leq \lambda \av{f}{Q}$. We show the following {\em main inequality}:
\begin{align}\label{mi}
B(x_{Q}, y_{Q}, z_{Q}; \lambda) \leq \sum_{P\in \mathrm{ch}(Q)} \frac{|P|}{|Q|} B(x_{P}, y_{P}, z_{P}; \lambda).
\end{align}

First notice that $z_{P} \geq \max\{x_{P}, z_{Q} \}$ for any $P\in \mathrm{ch}(Q)$. Since $B$ is increasing in $z$ in its domain, it is enough to prove (\ref{mi}) when the quantities $z_{P}$ are replaced by $\max\{ x_{P}, z_{Q}\}$. If $y_{Q}-x_{Q}z_{Q}^{p-1}\leq 0$, then the inequality is obvious because $B(x_{P},y_{P}, \max\{x_{P},z_{Q}\} ;\lambda)\geq z_{Q}^{p}$. So we assume $y_{Q}-x_{Q}z_{Q}^{p-1}>0$. Then we will prove the stronger inequality where all expressions of the form $(\,\cdots)_{+}$ on the right hand side of (\ref{mi}) are replaced by the lower bounds $(\,\cdots)$. Since $y_{Q} = \sum_{P} \frac{|P|}{|Q|}y_{P}$ and $x_{Q}=\sum_{P} \frac{|P|}{|Q|}x_{P}$, the inequality we wish to prove  takes the following form after dividing by $z_{Q}^{p}$
\begin{align*}
1  - \frac{\lambda^{p}-1}{\lambda^{p}-\lambda}\sum_{P \in \mathrm{ch}(Q)} \frac{|P|}{|Q|}\frac{x_{P}}{z_{Q}} \leq \sum_{P \in \mathrm{ch}(Q)} \frac{|P|}{|Q|}\left[\max\left\{\frac{x_{P}}{z_{Q}} ,1\right\}^{p} - \frac{\lambda^{p}-1}{\lambda^{p}-\lambda}\frac{x_{P}}{z_{Q}}\max\left\{ \frac{x_{P}}{z_{Q}},1\right\}^{p-1} \right].
\end{align*}
This can be rewritten as follows

\begin{align*}
\frac{\lambda^{p}-1}{\lambda^{p}-\lambda}\left[ \sum_{P\in \mathrm{ch}(Q)}\frac{|P|}{|Q|} \frac{x_{P}}{z_{Q}}\left( \max\left\{ \frac{x_{P}}{z_{Q}},1\right\}^{p-1}-1\right)\right]\leq \sum_{P\in \mathrm{ch}(Q)}\frac{|P|}{|Q|}\max \left\{ \frac{x_{P}}{z_{Q}},1\right\}^{p}-1.
\end{align*}
Replacing $\frac{x_{P}}{z_{Q}}$ by $\max\left\{\frac{x_{P}}{z_{Q}},1\right\}$ we see that it is enough to prove the following stronger (in fact equivalent) inequality 
\begin{align}\label{stronger}
\frac{\lambda^{p}-1}{\lambda^{p}-\lambda}\left[ \sum_{P\in \mathrm{ch}(Q)}\frac{|P|}{|Q|}\left( \max\left\{ \frac{x_{P}}{z_{Q}},1\right\}^{p}-\max\left\{ \frac{x_{P}}{z_{Q}},1\right\}\right)\right]\leq \sum_{P\in \mathrm{ch}(Q)}\frac{|P|}{|Q|}\max \left\{ \frac{x_{P}}{z_{Q}},1\right\}^{p}-1.
\end{align}
Now notice that if $1\leq s \leq \lambda$ then
\begin{align}\label{convexity}
\frac{\lambda^{p}-1}{\lambda^{p}-\lambda}(s^{p}-s)\leq s^{p}-1.
\end{align}
This is a consequence of the fact that the function  $s \mapsto s^{p}$ is  convex and its graph between $s=1$ and $s=\lambda$  lies below  corresponding chord. Since $1\leq  \max\left\{ \frac{x_{P}}{z_{Q}},1\right\} \leq \lambda$ the inequality (\ref{stronger}) follows by averaging (\ref{convexity}) at the points $\max\left\{\frac{x_{P}}{z_{Q}},1\right\}$ with the weights $\frac{|P|}{|Q|}$.  The main inequality (\ref{mi}) is proved.

We start with $S$ and iterate the main inequality $m$ times. We obtain 
\begin{align*}
B(x_{S}(f), y_{S}(f), z_{S}(f); \lambda)  \leq  \sum_{P \in \mathcal{F}_{m}} \frac{|P|}{|S|} B(x_{P}(f), y_{P}(f), z_{P}(f);\lambda).
\end{align*} 

Recall that by the definition of  $\lambda$-density  we have
\begin{align*}
\lim_{m \to \infty} \sup_{P \in \mathcal{F}_{m}}\mathrm{diam}(P)=0. 
\end{align*}

The family $\mathcal{Q}$ consists of  convex sets so the Lebesgue differentiation theorem implies  that in the limit we will obtain the desired result:
\begin{align*}
B(x_{S}(f), y_{S}(f), z_{S}(f); \lambda) \leq \av{B(f,f^{p}, M_{\mathcal{Q}}(f))}{S} = \av{(M_{\mathcal{Q}}(f)))^{p}}{S}.
\end{align*}
\end{proof}

\subsection*{Remark 1}\label{sun}
We would like to mention that in one dimensional case there is a simple proof to obtain the constant $(\frac{p}{p-1})^{1/p}$ as the lower bound for the uncentered maximal function operator defined over the intervals. The argument is due the Lerner~\cite{ler} (see also \cite{gr}). Indeed,  for $f \geq 0$ we consider $(M_{R}f)(x) = \sup_{b>x} \frac{1}{b-x} \int_{x}^{b} f(t) dt$. Then notice that 
\begin{align}\label{grafakos}
t | \{x : (M_{R} f)(x)>t \}| = \int_{\{x\,  :\,  (M_{R} f)(x)>t \}}f(s)ds
\end{align} 
(for example, see Lemma~1 in \cite{gr}). Now multiplying  both sides of  (\ref{grafakos}) by $t^{p-2}$ and integrating from $0$ to $\infty$  with respect to $dt$ we obtain $\frac{\| M_{R}f\|_{p}^{p}}{p} = \frac{1}{p-1}\int_{\mathbb{R}}f (M_{R}f)^{p}$. Therefore $\frac{\| Mf \|^{p}_{p}}{p} \geq \frac{1}{p-1}\int_{\mathbb{R}}f^{p}$ and the desired estimate follows. Unfortunately it is unclear how to use this argument to obtain the lower bounds for the maximal function operator defined  over the  $\lambda$-dense family in $\mathbb{R}^{n}$, $n \geq 2$.
\bigskip 
\subsection*{Remark 2}
 We  omit  the explanation of why we consider this special function $B(x,y,z; \lambda)$ because it is not necessary for the formal proof. However, the reason lies in the geometry of the solution of the relevant homogeneous Monge--Amp\`ere equation. The function appeared for the first time in ~\cite{MNS1}. It can be derived by  using the methods recently developed  in works of N.~Osipov, D.~Stolyarov, V.~Vasyunin, P.~Zatitskiy and P.~Ivanisvili (see e.g. \cite{iosvz1, iosvz3, pi1}). 
 
 

\section*{Appendices}
\subsection{Besicovich covering lemma}
\begin{lemma}\label{bcl}
Let $\mathcal{K}_{1} \subset \mathcal{K}$, and let $E$ be the set of centers of the sets in $\mathcal{K}_{1}$. Assume that $E$ is a bounded subset of $\mathbb{R}^{n}$, and $\sup_{S \in \mathcal{K}_{1}} \mathrm{diam}(S)<\infty$. Then there exists a constant $B(n)>0$  depending  only on the dimension $n$,  and at most countable collection of sets $K_{j} \in \mathcal{K}_{1}$ such that $E \subset \cup K_{j}$ and each point of $x\in \mathbb{R}^{n}$ is covered by at most  $B(n)$ sets from the family $\{ K_{j}\}$.  
\end{lemma}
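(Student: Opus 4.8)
The plan is to use the classical greedy (maximal-element) selection procedure, followed by a geometric packing argument to control the overlap; the only point requiring care beyond the textbook Besicovitch theorem for balls is that the overlap constant should depend on $n$ alone, which I would secure by an affine normalization at the outset. Note first that all three assertions to be proved --- that the selected sets lie in $\mathcal{K}_1$, that they cover $E$, and that $\sum_j \chi_{K_j} \le B$ everywhere --- are unchanged if an invertible linear map $T$ is applied throughout, since $T$ sends an element $x_S + t_S K$ of $\mathcal{K}$ to $Tx_S + t_S(TK)$, a homothet of $TK$, maps centers to centers, and preserves boundedness of $E$ and of the diameters. By John's theorem a centrally symmetric convex body satisfies $\mathcal{E} \subseteq K \subseteq \sqrt{n}\,\mathcal{E}$ for some ellipsoid $\mathcal{E}$, so choosing $T$ to carry $\mathcal{E}$ to the unit ball we may assume $B(0,1) \subseteq K \subseteq B(0,\sqrt{n})$. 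Writing each $S \in \mathcal{K}$ as $S = x_S + t_S K$ with $t_S > 0$, we then have $B(x_S, t_S) \subseteq S \subseteq B(x_S, \sqrt{n}\, t_S)$, and $\sup_{S \in \mathcal{K}_1} t_S < \infty$ by hypothesis.

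Next I would run the greedy selection: having chosen $K_1, \dots, K_{m-1}$, set $E_m = E \setminus \bigcup_{i<m} K_i$; stop if $E_m = \emptyset$, otherwise pick $K_m \in \mathcal{K}_1$ with center $x_m \in E_m$ and $t_m \ge \tfrac12 \sup\{t_S : S \in \mathcal{K}_1,\ x_S \in E_m\}$ (the factor $\tfrac12$ only matters if the supremum is not attained). To see that the resulting family covers $E$, suppose $x^\ast \in E \setminus \bigcup_j K_j$ and fix $S^\ast \in \mathcal{K}_1$ centered at $x^\ast$ with dilation factor $\delta > 0$; then $x^\ast \in E_m$ for all $m$, so $S^\ast$ is always a legal candidate, forcing $t_m \ge \delta/2$ for every $m$ and in particular making the process infinite. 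But for $m' > m$ one has $x_{m'} \in E_{m'} \subseteq E_{m+1}$, so $x_{m'} \notin K_m \supseteq B(x_m, \delta/2)$; thus $\{x_m\}$ would be a $(\delta/2)$-separated subset of the bounded set $E$, which is impossible.

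For the overlap bound, fix $x \in \mathbb{R}^n$ and let $I = \{i : x \in K_i\}$. For $i < j$ in $I$: since $x_j \in E_j \subseteq E_{i+1}$ we get $x_j \notin K_i$, hence $|x_i - x_j| > t_i$ (because $B(x_i,t_i) \subseteq K_i$); and since $K_j$ was an admissible candidate at step $i$ (its center lies in $E_j \subseteq E_i$) we have $t_i \ge \tfrac12 t_j$; moreover $|x - x_i| \le \sqrt{n}\, t_i$ and $|x - x_j| \le \sqrt{n}\, t_j \le 2\sqrt{n}\, t_i$. These are precisely the hypotheses of the standard Besicovitch geometric lemma, which --- after a case analysis on whether $|x-x_i|$ or $|x-x_j|$ is the larger --- assigns to each $i \in I$ with $x \ne x_i$ (this exception occurs for at most one $i$) a unit vector $u_i$ such that $|u_i - u_j| \ge c_n > 0$ for all distinct $i,j \in I$. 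Since at most $N(n)$ points can be $c_n$-separated on $S^{n-1}$, we conclude $\#I \le N(n) + 1 =: B(n)$, a bound depending only on $n$.

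The main obstacle is exactly the geometric lemma invoked in the last step: two selected bodies through a common point $x$ may differ enormously in size, and it is genuinely false that the directions from $x$ to their centers are pairwise separated, so the naive angle-at-$x$ argument fails and one must run the more careful classical case split (using $|x_i - x_j| > t_i \ge \tfrac12 t_j$ in each branch). It is through this lemma that convexity and central symmetry enter, via the round sandwich $B(0,1) \subseteq K \subseteq B(0,\sqrt{n})$ obtained in the normalization step; for the remaining details, which are elementary but fiddly, I would follow a standard reference.
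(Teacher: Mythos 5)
The paper does not prove Lemma~\ref{bcl} directly; it observes that the statement is just the Besicovitch covering theorem for balls of the norm whose unit ball is $K$, and refers to Besicovitch, Morse, Federer, and F\"uredi--Loeb for the proof (the latter giving the explicit bound $B(n)\le 5^n$ uniformly over all origin-symmetric $K$). Your route is genuinely different: rather than working with the abstract norm, you normalize via John's theorem so that $B(0,1)\subseteq K\subseteq B(0,\sqrt{n})$ and then reduce to Euclidean geometry. The normalization is valid (all three conclusions are affine-invariant, as you note), and your greedy selection and the covering argument via the $(\delta/2)$-separated net in the bounded set $E$ are correct. The one point to make fully explicit is that after your normalization the ``standard Besicovitch geometric lemma'' cannot be cited verbatim: its textbook hypotheses are $|x-x_i|<r_i$ together with $|x_i-x_j|>r_i$, so the inner and outer scales coincide, whereas you only secure $|x-x_i|\le\sqrt{n}\,t_i$ against $|x_i-x_j|>t_i$, a $\sqrt{n}$ mismatch between the two scales. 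You need (and correctly anticipate needing) the eccentric version of the angular-separation lemma, obtained by rerunning the classical case analysis carrying the factor $\sqrt{n}$ along; this is exactly where your constant picks up its $n$-dependence. A consequence is that your constant ends up substantially larger than the $5^n$ available from the norm-theoretic argument the paper cites, but that is harmless here, since the lemma demands only some finite $B(n)$. If you state that modified geometric lemma explicitly rather than attributing it to the textbook version, the proof is complete.
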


Notice that the above formulation of the Besicovitch covering lemma with origin symmetric convex sets is equivalent to the more commonly found formulation concerning a collection of balls, only with the usual Euclidean norm replaced by some other norm in $\R^n$.  A proof of the covering lemma in a general finite dimensional normed space can be found (with a bit of work) in \cite{B,M,F}.  For a simple proof, the reader can see F\"{u}redi and Loeb \cite{FL}, where it is moreover shown that $(1.001)^n\leq B(n)\leq 5^n$ for any choice of origin symmetric convex body.\\

We shall use the Besicovitch covering lemma to prove the general form of Stein's inequality.

\begin{lemma}\label{stein}There is a constant $C=C(n)>0$ such that if $K_0\in \mathcal{K}$ and $f\geq 0$ satisfies $\av{f}{K_0}=1$, then
$$\int_{K_0} f \ln (\max\{1,f\})dx \leq C\int_{K_0} M_0(f)dx.
$$
\end{lemma}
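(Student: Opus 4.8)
The plan is to follow the classical route to Stein's $L\log L$ inequality, replacing the Calderón--Zygmund stopping construction by the Besicovitch covering lemma (Lemma~\ref{bcl}). First I would reduce to the case where $f\in L^1(K_0)$ is supported in $K_0$: replacing $f$ by $f\chi_{K_0}$ does not change the left-hand side and only decreases $M_0f$, hence the right-hand side; note then $\|f\|_{L^1(\mathbb R^n)}=\int_{K_0}f=|K_0|$. One may also assume $\int_{K_0}M_0f<\infty$, since otherwise there is nothing to prove. The backbone is the layer-cake identity (Tonelli)
$$\int_{K_0}f\ln(\max\{1,f\})\,dx=\int_1^\infty\frac1t\Bigl(\int_{E_t}f\,dx\Bigr)\,dt,\qquad E_t:=\{x\in K_0:\,f(x)>t\},$$
which reduces matters to a good bound on $\int_{E_t}f$ for $t\ge 1$.

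The key claim is: for every $t\ge1$,
$$\int_{E_t}f\,dx\le 2^{n}B(n)\,t\,\bigl|\{y\in K_0:\,M_0f(y)\ge 2^{-n}t\}\bigr|.\qquad(\star)$$
To prove $(\star)$, at each Lebesgue point $x$ of $f$ with $f(x)>t$, the continuous function $s\mapsto\av{f}{sK_x}$ (with $K_x$ the body $K$ centred at $x$) tends to $f(x)>t$ as $s\to0^+$ and to $0$ as $s\to\infty$, so by the intermediate value theorem there is $S_x\in\mathcal K$ centred at $x$ with $\av{f}{S_x}=t$; necessarily $|S_x|=t^{-1}\int_{S_x}f\le t^{-1}\|f\|_1=t^{-1}|K_0|\le|K_0|$, so $S_x$ is a homothetic copy of $K_0$ of ratio $\mu_x\le1$. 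The centres $x$ lie in the bounded set $K_0$ and $\sup_x\mathrm{diam}(S_x)<\infty$, so Lemma~\ref{bcl} produces a countable subfamily $\{S_{t,j}\}$ covering a.e.\ point of $E_t$ with overlap at most $B(n)$. Two elementary geometric facts then finish $(\star)$. \emph{(i)} Each $S_{t,j}$ lies in $\{M_0f\ge2^{-n}t\}$: for $y\in S_{t,j}$, the homothety $\widehat S$ of $S_{t,j}$ of ratio $2$ centred at $y$ lies in $\mathcal K$ and contains $S_{t,j}$ (each $s\in S_{t,j}$ is the image of the midpoint of $s$ and $y$, which lies in $S_{t,j}$ by convexity), so $\av{f}{\widehat S}\ge2^{-n}\av{f}{S_{t,j}}=2^{-n}t$. \emph{(ii)} $|S_{t,j}\cap K_0|\ge2^{-n}|S_{t,j}|$: writing $S_{t,j}=x+\mu(K_0-c)$ with $c$ the centre of $K_0$, $x\in K_0$ and $\mu\le1$, one checks (a short computation with the Minkowski gauge of $K_0-c$, placing the centre on the segment $[x,c]$ and optimising) that $(x+\mu(K_0-c))\cap K_0$ contains a translate of $\rho(K_0-c)$ with $\rho\ge\mu/2$, whence $|S_{t,j}\cap K_0|\ge(\mu/2)^n|K_0|=2^{-n}|S_{t,j}|$. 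Combining, $\int_{E_t}f\le\sum_j\int_{S_{t,j}}f=t\sum_j|S_{t,j}|\le 2^n t\sum_j|S_{t,j}\cap K_0|\le 2^nB(n)\,t\,|\{y\in K_0:M_0f(y)\ge2^{-n}t\}|$, which is $(\star)$.

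Finally I would insert $(\star)$ into the layer-cake identity and substitute $s=2^{-n}t$:
$$\int_{K_0}f\ln(\max\{1,f\})\,dx\le 2^nB(n)\int_1^\infty\bigl|\{y\in K_0:M_0f(y)\ge2^{-n}t\}\bigr|\,dt\le 4^nB(n)\int_0^\infty\bigl|\{y\in K_0:M_0f(y)\ge s\}\bigr|\,ds,$$
and the last integral equals $\int_{K_0}M_0f\,dx$, giving the lemma with $C=4^nB(n)$. The only non-routine ingredients are the two geometric facts: \emph{(i)} is the standard doubling trick for re-centring a convex body, while \emph{(ii)} — that a homothetic copy of $K_0$ of ratio $\le1$ centred inside $K_0$ retains a dimensional fraction of its volume inside $K_0$ — is the point that allows the right-hand side to be taken as $\int_{K_0}M_0f$ rather than $\int_{2K_0}M_0f$; I expect \emph{(ii)}, although elementary, to be the step requiring the most care, since it is precisely what prevents mass of $f$ concentrated near $\partial K_0$ from spoiling the estimate.
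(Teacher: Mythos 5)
Your proof follows the same route as the paper's: at each level $t>1$ you select convex bodies centred on $\{f>t\}$ with average exactly $t$, apply the Besicovitch covering lemma, and finish with the concentric-doubling trick together with the observation that a homothet of $K_0$ of ratio at most $1$ centred inside $K_0$ retains a dimensional fraction of its volume inside $K_0$. The only differences are cosmetic: you make the constant in that last geometric fact explicit (giving $c(n)=2^{-n}$, hence $C=4^nB(n)$) and are a bit more careful about restricting to Lebesgue points when invoking the intermediate value theorem.
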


Notice that the lemma is proved for the centered maximal function  (i.e., the smallest maximal function).

\begin{proof}  Without loss of generality we may assume that $f$ is supported in $K_0$.  Let $t>1$.  For each $x\in \{f>t\}$, choose some set $K_x\in \mathcal{K}$ centered at $x$ that satisfies
$\av{f}{K_x}=t.$ Notice that for each $t>0$ the diameter of such sets $K_{x}$ are uniformly bounded.
Now use the Besicovitch covering lemma to extract a sequence $K_j$ from the collection $\{K_x: x\in \{f>t\}\}$ such that 

$\bullet$  $\bigcup_j K_j\supset \{f>t\}$, and

$\bullet$ the sets $K_j$ have bounded overlap (with overlap number $B(n)\leq 5^n$).  

For each $x\in K_j$, we have that the set in $\mathcal{K}$ given by the concentric double of $K_j$ shifted to be centred at $x$ contains the set $K_j$.  From this we deduce that
$M_0(f)\geq \tfrac{t}{2^n}$ on $\bigcup_j K_j$. 
Finally, notice that $|K_{j}|\leq |K_{0}|$ (because $t> 1=\av{f}{K_{0}}$). Since each set $K_j$ is centrally symmetric and  centered in $K_0$, it  is a subset of $2K_0$  and  $|K_j\cap K_0|\geq c(n)|K_j|$ for a constant $c(n)$ depending only on $n$.  Therefore,
$$|\{x\in K_0: M_0(f)(x)>2^{-n}t\}|\geq  \frac{1}{B(n)}\sum_j |K_j\cap K_{0}| =\frac{c(n)}{t B(n) }  \sum_j \int_{K_j} fdx \geq \frac{c(n)}{t B(n)}\int_{\{f>t\}} fdx.
$$
Integrating this expression over $t>1$ we obtain the  desired estimate. 
\end{proof}

\subsection{$\lambda$-density of parallelepipeds}
\begin{lemma}\label{ldens}
The set of all parallelepipeds in $\mathbb{R}^{n}$ with nonzero volume, such that the sides are parallel to  the fixed $n$ linearly independent vectors, is $\lambda$-dense for any $\lambda>1$.
\end{lemma}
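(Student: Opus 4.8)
The plan is to construct, for any fixed parallelepiped $Q$ and any locally integrable $f\ge 0$, a filtration satisfying properties (1)--(7) of the definition. By applying a linear change of variables I may assume the $n$ reference vectors are the standard basis vectors, so $Q$ is an axis-parallel box; this is harmless because all seven properties are preserved by invertible linear maps. The subdivision mechanism is a \emph{stopping-time construction}: starting from $P=Q$, I bisect $P$ along one coordinate direction, cycling through the $n$ directions as I descend, and I only keep subdividing a child once its average of $f$ has been controlled. More precisely, from a box $P\in\mathcal F_m$ I look at its two halves obtained by bisecting in direction $m\bmod n$; if a half $R$ satisfies $\av{f}{R}\le\lambda\av{f}{P}$ I accept it, and otherwise I must refine $R$ \emph{immediately} (before advancing the coordinate counter) until all descendants have average at most $\lambda\av{f}{P}$. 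The point is that bisecting can at most double an average (since $\av{f}{P}=\tfrac12\av{f}{R_1}+\tfrac12\av{f}{R_2}$ forces $\av{f}{R_i}\le 2\av{f}{P}$), so after finitely many bisections in a row the average must come back down below $\lambda\av{f}{P}$ — this uses the Lebesgue differentiation theorem, since $\av{f}{R}\to f(x)$ a.e.\ along a shrinking sequence of boxes, so an average cannot stay bounded below by $\lambda^k\av{f}{P}$ forever. This guarantees property (7), and making the acceptance threshold relative to the \emph{parent} $P$ (not the root) is what lets $\lambda$ be arbitrarily close to $1$.

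The steps, in order: (i) reduce to axis-parallel boxes by linear invariance; (ii) describe the one-step refinement of a box $P\in\mathcal F_m$: bisect in direction $d=m\bmod n$, recursively subdivide only those halves whose $f$-average exceeds $\lambda\av{f}{P}$ (again bisecting in direction $d$ each time), collecting the resulting finite or countable family of accepted sub-boxes as $\mathrm{ch}(P)$; (iii) check that this recursion terminates a.e., i.e.\ that $\mathrm{ch}(P)$ is a legitimate countable almost-disjoint partition of $P$ — here I invoke the Lebesgue differentiation theorem to rule out an infinite descending chain of halves all with average $>\lambda\av{f}{P}$ occurring on a set of positive measure, and I note the boxes in $\mathrm{ch}(P)$ are pairwise disjoint up to their shared faces, which have measure zero; (iv) define $\mathcal F_{m}$ inductively as the union of $\mathrm{ch}(P)$ over $P\in\mathcal F_{m-1}$, with $\mathcal F_0=\{Q\}$, and verify (1)--(5) by construction; (v) verify the diameter condition (6) by observing that in $n$ consecutive generations each box has been bisected at least once in \emph{every} coordinate direction, so $\sup_{P\in\mathcal F_{kn}}\mathrm{diam}(P)\le 2^{-k}\mathrm{diam}(Q)\to 0$; and (vi) record that (7) holds with room to spare since every $R\in\mathrm{ch}(P)$ has $\av{f}{R}\le\lambda\av{f}{P}$ by the acceptance rule.

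The main obstacle is step (iii): showing the refinement of a single box really does terminate (produces an at-most-countable almost-disjoint cover of $P$, not a positive-measure ``residual'' set that is never accepted). The clean way is to argue by contradiction: if a positive-measure set $E\subset P$ is never accepted, then every point of $E$ lies in an infinite nested sequence of dyadic-in-direction-$d$ subintervals $I_1\supset I_2\supset\cdots$ with $\av{f}{I_k\times(\text{fixed cross-section})}>\lambda\av{f}{P}$ for all $k$; but for a.e.\ $x\in E$ the slice-averages converge to $f(x)$ along a suitable Lebesgue-type differentiation through these shrinking boxes (the boxes shrink to a point because the other $n-1$ side lengths are fixed and only the $d$-th shrinks — so actually I should alternate directions \emph{inside} the refinement too, or instead bisect in all $n$ directions simultaneously at each refinement step, making each child exactly a $2^{-n}$-scaled copy; the latter is cleaner and costs nothing). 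With boxes genuinely shrinking to points, $\av{f}{\text{box}}\to f(x)$ for a.e.\ $x$ by Lebesgue differentiation, contradicting the uniform lower bound $\lambda\av{f}{P}>0$ (the case $\av{f}{P}=0$ being trivial: then $f=0$ a.e.\ on $P$ and any refinement works). A secondary, purely bookkeeping nuisance is keeping the coordinate-cycling consistent so that the diameter bound (6) is transparent; switching to all-directions-at-once bisection for $\mathrm{ch}(P)$ (so $Q$ is split into $2^n$ congruent sub-boxes, some of which are then split further) resolves this as well, at the cost of $\lambda$-density only for $\lambda>1$ rather than some sharper statement — which is exactly what is claimed.
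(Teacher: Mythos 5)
Your stopping-time construction breaks at step (iii): the residual set of points never captured by an accepted child can have positive measure, and the Lebesgue differentiation argument you invoke to rule this out in fact shows nothing of the kind. Consider $Q=[0,1]$, $f=2\chi_{[0,1/2]}$, and any $\lambda\in(1,2)$. Then $\av{f}{Q}=1$, while every subinterval $I\subset[0,1/2]$ of positive length has $\av{f}{I}=2>\lambda\av{f}{Q}$. Repeated bisection of $[0,1/2]$ therefore never produces an accepted box; the residual set is all of $[0,1/2]$, of measure $1/2$, so $\mathrm{ch}(Q)$ fails to cover $Q$ and property (3) of $\lambda$-density is violated. Lebesgue differentiation does give $\av{f}{R_k}\to f(x)$ a.e.\ along a shrinking chain of boxes, but that limit is $f(x)$, and the conclusion is simply that every never-accepted point lies in $\{f\geq\lambda\av{f}{Q}\}$ — a set whose measure is bounded only by $|Q|/\lambda$, not zero. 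There is no contradiction. More structurally, halving a box only guarantees a child's average is at most twice the parent's, so equal-volume bisection (in one direction, or in all $n$ at once) can deliver $2$-density or $2^n$-density, but never $\lambda$-density for $\lambda<2$; no amount of further equal-volume bisection can fix a piece whose average is already stuck above $\lambda\av{f}{Q}$ at every scale.

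The paper avoids bisection entirely and instead uses an adaptive, unequal cut, and that is the idea missing from your proposal. Starting from the midpoint hyperplane, one slides it in the direction that shrinks the piece with the smaller average, stopping either when the two averages become equal (then both equal $\av{f}{P}\le\lambda\av{f}{P}$, trivially) or when the larger piece $P_+$ reaches volume $|P_+|=|P|/\lambda$. In the latter case $\av{f}{P_+}\le\tfrac{|P|}{|P_+|}\av{f}{P}=\lambda\av{f}{P}$ follows from $\av{f}{P_-}\geq 0$ alone, and the smaller piece has the smaller average throughout. A single cut thus always produces exactly two children satisfying (7), with both children of volume at most $|P|/\lambda$ (using $1<\lambda\le 2$), which supplies the uniform shrinkage needed for (6) once the cutting direction is chosen to halve the longest side. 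Without some such adaptive choice of the cut position there is no mechanism to force children below the threshold $\lambda\av{f}{P}$ for $\lambda$ near $1$, which is precisely where the sharp constant $\bigl(\tfrac{p}{p-1}\bigr)^{1/p}$ comes from.
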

\begin{proof}
Clearly if the family is $\lambda_{1}$-dense then it is $\lambda_{2}$ dense for any $\lambda_{2}\geq \lambda_{1}$. Therefore we consider the case when $2 \geq \lambda >1$.
Let $P$ be a parallelepiped from the family.  Take any hyperplane which is parallel to a facet $L^{+}$ of the parallelepiped and call it $H$. The facet $L^{+}$ has an opposite facet $L^{-}$. We consider those $H$  that intersect the parallelepiped and divide it into  two parts which we denote by $P_{-}^{H}$ and $P_{+}^{H}$ correspondingly ($P_{+}^{H}$ contains the facet $L_{+}$). First choose $H$ so that $|P_{-}^{H}|=|P_{+}^{H}|$. If $\av{f}{P^{H}_{-}}=\av{f}{P^{H}_{+}}$ then $\max\{ \av{f}{P^{H}_{\pm}}\}\leq \lambda \av{f}{P}$, and we say that the partition $P=P^{H}_{-}\cup P^{H}_{+}$ is a {\em good partition}. Otherwise, consider the case  $\av{f}{P^{H}_{-}} < \av{f}{P^{H}_{+}}$ (the case of the opposite inequality is similar). Then we start moving the hyperplane $H$ closer to the facet $L^{-}$. Let $H^{*}$ be the  hyperplane parallel to $L^{+}$  such that $\lambda  = \frac{|P|}{|P^{H^{*}}_{+}|}$. 
We move $H$ toward  $H^{*}$ while $\av{f}{P^{H}_{-}} < \av{f}{P^{H}_{+}}$. If at some position of $H$  the equality happens then we stop and say that the hyperplane $H$ in that position gives a {\em good partition} of $P$, $P=P_{-}^{H}\cup P_{+}^{H}$. If the equality never happens then we just choose $H=H^{*}$. Notice that in this case we have
\begin{align*}
\lambda \av{f}{P} = \lambda\left( \frac{|P^{H^{*}}_{-}|}{|P|}\av{f}{P^{H^{*}}_{-}}+\frac{|P^{H^{*}}_{+}|}{|P|}\av{f}{P^{H^{*}}_{+}}\right)\geq \lambda \frac{|P^{H^{*}}_{+}|}{|P|}\av{f}{P^{H^{*}}_{+}} = \max\{\av{f}{P^{H}_{\pm}} \}.
\end{align*}
The last equality follows from the fact that all the time we had $\av{f}{P^{H}_{-}} < \av{f}{P^{H}_{+}}$.

The advantage of this partition is that  the hyperplane $H$ which splits the parallelepiped $P$  into  two parallelepipeds $P=P_{-}\cup P_{+}$ satisfies the following properties
\begin{itemize}
\item[1.] We have $\max\{ \av{f}{P_{+}}, \av{f}{P_{-}}\} \leq \lambda \av{f}{P}$.
\item[2.] None of the parts $P_{-}$ and $P_{+}$ is too large,  i.e., $\max \left\{\frac{|P_{-}|}{|P|}, \frac{|P_{+}|}{|P|}\right\} \leq \frac{1}{\lambda}$
\end{itemize}

If we iterate the partition then it  almost gives us $\lambda$-density, except it might happen that the diameters of the smaller parallelepipeds will not tend to zero.

 In order to chop the parallelepiped so that the diameters of $P_{\pm}$ tend to zero uniformly in the process of iteration, sometimes we need to change the direction of the hyperplane $H$ (it should be parallel to different facets of $P$).  On each step, we consider the largest side (face of dimension 1) of the parallelepiped $P$ (if they are several, we pick any of them). We choose the direction of the hyperplane $H$ so that it is parallel to the facets transversal to the largest side. Since on each step we are cutting the largest side of the parallelepiped with the ratio separated uniformly from zero,  it is clear that if we set $\mathcal{F}_{0}=\{ P\}$, $\mathcal{F}_{1}=\{P_{-}, P_{+}\}$, $\mathcal{F}_{2}=\{P_{- -}, P_{- +} , P_{+-}, P_{++}\}$ etc.,  we will have
$$
\lim_{n \to \infty}\sup_{P \in \mathcal{F}_{n}} \mathrm{diam}(P)=0.
$$
\end{proof}

\end{document}